\newtheorem{theorem}{Theorem}[section]
\newtheorem{lemma}[theorem]{Lemma}
\newtheorem{proposition}[theorem]{Proposition}
\newtheorem{corollary}[theorem]{Corollary}
\newtheorem{definition}[theorem]{Definition}
\newtheorem*{notation}{Notation}
\theoremstyle{remark}
\newtheorem{remark}[theorem]{Remark}
\newcommand{\N}{{\mathbb{N}}}
\newcommand{\R}{{\mathbb{R}}}
\newcommand{\al}{\alpha}
\newcommand{\de}{\delta}
\newcommand{\mc}{\mathcal}
\newcommand{\mt}{\mc{T}}
\newcommand{\mrr}{\mc{R}}
\newcommand{\dd}{{\bf d}}
\newcommand{\mm}{{\bf m}}
\numberwithin{subsection}{section}
\numberwithin{equation}{section}
\DeclareMathOperator{\supp}{supp}
\DeclareMathOperator{\weight}{w}
\DeclareMathOperator{\ran}{range}
\DeclareMathOperator{\suc}{succ}
\begin{document}

\title{Small operator ideals on the Schlumprecht and Schreier spaces}
\author{Antonis Manoussakis} 
\address{School of Environmental Engineering, Technical University of Crete, 
University Campus, 73100  Chania, Greece}
\email{amanousakis@isc.tuc.gr}
\author{Anna Pelczar-Barwacz} 
\address{Institute of Mathematics, Faculty of Mathematics and Computer Science, Jagiellonian University, {\L}ojasiewicza 6, 30-348 Krak\'ow,  Poland}
\email{anna.pelczar@uj.edu.pl}

\maketitle

\begin{abstract}

We present a criterion for operators on a Banach space $X$ to generate distinct operator ideals in the algebra $\mathscr{B}(X)$ of bounded linear operators on $X$. 
We show  that there are exactly $2^\mathfrak{c}$ distinct small closed operator ideals on the Schlumprecht space and  on any Schreier space of finite order. 
\end{abstract}

\section{Introduction}

The algebras of bounded linear operators on a Banach spaces provide natural examples of non-commutative Banach algebras; 
the structure of the lattice of closed operator ideals is widely studied. We recall here only some of known results, for the thorough survey referring to \cite{bkl,ll,jps,sz}.   
The study of the closed operator ideals dates back to \cite{c}, where it was proved that the ideal of compact operators $\mathscr{K}(\ell_2)$ is the only non-trivial closed operator ideal on the Hilbert space $\ell_2$.   
The result was generalized by I.~Gohberg, A.~Markus and I.~Feldman to the case of $c_0$ and $\ell_p$, $1\leq p<\infty$. The list of Banach spaces with fully described lattice of closed operator ideals includes also non-separable Hilbert spaces \cite{g,l}, spaces $(\bigoplus_{n=1}^\infty\ell_2)_{c_0}$ and 
$(\bigoplus_{n=1}^\infty\ell_2)_{\ell_1}$  \cite{llr,lsz}, and - of completely different character - the celebrated Argyros-Haydon space $X_{AH}$ with the property $\mathscr{B}(X_{AH})/\mathscr{K}(X_{AH})\cong \R$, as well as some of spaces built on its basis, see \cite{kl,mpz,t1,t2}. 

Recently optimal results on the cardinality of the lattice of closed operator ideals were obtained for various classical spaces, answering long open questions of A.~Pietsch \cite{p}. Recall here that the maximal cardinality of a family of distinct closed ideals on a separable Banach space is $2^\mathfrak{c}$, whereas the maximal length of a chain of closed operator ideals is $\mathfrak{c}$.
The research was often based on the analysis of complemented subspaces of the considered space or of the family of strictly singular operators acting on the space. Recall here that an operator $T : X\to X$ is called \textit{strictly singular} if none of its restriction to an infinite-dimensional subspace of $X$ is an isomorphism onto its image. The family $\mathscr{S}(X)$ of bounded strictly singular operators on a Banach space $X$ forms a classical example of a closed operator ideal. It is useful to distinguish operator ideals with respect to the ideal of strictly singular operators; following \cite{jps} an operator ideal in $\mathscr{B}(X)$ is called \textit{small} if it is contained in $\mathscr{S}(X)$, otherwise it is called \textit{large}. 
We mention here the latest known results. 
W.B.~Johnson and G.~Schechtman proved in \cite{js} that $\mathscr{B}(L_p)$, $1<p<\infty$, $p\neq 2$, contains $2^\mathfrak{c}$ many distinct large and $2^\mathfrak{c}$ small closed operator ideals, whereas in \cite{jps} chains of continuum many small closed  ideals on the spaces $L_1(0,1)$, $C[0,1]$ and $L_\infty(0,1)$ were built. In the paper \cite{fsz2} the authors present a general method of constructing $2^\mathfrak{c}$ many distinct small operator ideals, and applied it  in particular in the case of the spaces $\ell_p\oplus\ell_q$, $1\leq p<q\leq\infty$, and $\ell_p\oplus c_0$, $1\leq p<\infty$, as well as in the case of products of reflexive $\ell_p$ spaces and convexified Tsirelson space and their duals. As a consequence another proof of the result of \cite{js} was obtained. In the remaining case of the spaces  $\ell_1\oplus c_0$ and $\ell_1\oplus \ell_\infty$  uncountable chains of closed ideals were built in \cite{sw}, this is the best known result concerning these spaces so far. 

Spaces less "classical" are also studied with respect to the lattice of closed ideals, as Figiel spaces \cite{ll,js}, the Tsirelson space and Schreier spaces of finite order \cite{bkl}. In the last paper the authors showed that in both cases there are continuum many maximal (large) ideals generated by projections on subspaces spanned by subsequences of the canonical bases of the considered spaces. In the case of the Tsirelson space the result can be strengthened, as this space admits $2^\mathfrak{c}$ many distinct closed operator operator ideals by W.B.~Johnson (cf. \cite{fsz2}). 
  
In the present paper we add to the list of Banach spaces with the lattice of closed operator ideals  of the maximal cardinality,  
the Schlumprecht space $X_S$, a fundamental example of Banach space theory, and the Schreier spaces of finite order $X[\mathcal{S}_N]$, $N\in\N$, in the second case solving Problem 16 in \cite{fsz2}. Both in the  case of the Schlumprecht space and Schreier spaces we build a family of small closed operator ideals $(I_A)_{A\subset\R}$ with the property that $A\subset B$ iff $I_A\subset I_B$ for any $A,B\subset\R$ (Theorems \ref{main} and \ref{main-schreier}).  

Recall that by \cite{w} the Schlumprecht space being complementably minimal \cite{as2} (i.e. any its closed infinite-dimensional subspace contain a complemented copy of the whole space) admits a unique maximal ideal, which is $\mathscr{S}(X_S)$, limiting the study on closed ideals on $X_S$ to the small ones. In the case of the Gowers-Maurey space $X_{GM}$, the first known hereditarily indecomposable space, defined on the basis of $X_S$, the ideal $\mathscr{S}(X_{GM})$ is also the unique maximal ideal, however, for different reasons. In \cite{as} strictly singular non-compact operators both on $X_S$ and $X_{GM}$ were constructed. Moreover it was noticed that $\ell_\infty/c_0$ can be embedded into $\mathscr{S}(X_{GM})/\mathscr{K}(X_{GM})$, however, to the best of our knowledge, it was not known how many, if any, there are closed ideals between $\mathscr{S}(X_{S})$ and $\mathscr{K}(X_S)$ (cf. \cite{ll}).
The proof of our result in the case of the Schlumprecht space is based on a refined construction of strictly singular operators of \cite{as}, with use of some techniques of \cite{mp2}, and on a general criterion we present in the second section (Prop. \ref{main-criterion}) concerning operators of a specific form with a certain $c_0$ type behaviour. Our approach in the case of Schreier spaces relies heavily on the characterization of domination of one subsequence of the canonical basis by another of \cite{gl} and uses the presence of $c_0$ inside the considered spaces. 

The above results leave the question on a Banach space with an unconditional basis admitting exactly $\mathfrak{c}$ many distinct operator ideals open (cf. \cite{fsz2}); the question in the general case of separable Banach spaces was answered in particular in \cite{mpz,t2}.

The paper is organized as follows: in the second section we present some criterion for generating distinct operator ideals by operators of a specific form, the third section is devoted to the case of the Schlumprecht space, the forth section concerns the Schreier spaces of finite order. 

\

We recall here the basic notions concerning trees, as we shall need them both in the case of the Schlumprecht space and the Schreier spaces. 
By a {\em  tree}  we shall mean a non-empty partially ordered  set $(\mt, \preceq)$ for which the set $\{ y \in \mt:y \preceq x \}$ is linearly ordered and finite for each $x \in \mt$.
The \textit{root} is the smallest element of the tree (if it exists). The {\em terminal}  nodes are the maximal elements. A {\em branch} of $\mt$ is any maximal linearly ordered set in $\mt$. A tree with no infinite branches is called \textit{well-founded}.  The {\em immediate successors}  of $x \in \mt$ are all  the nodes $y \in \mt$ such that $x \prec y$ but there is no $z \in \mt$ with $x \prec z \prec y$, a set of immediate successors of $x$ denote by $\suc (x)$. A tree is \textit{finitely branching}, if any its non-terminal element has finitely many immediate successors. 
If $\mt$ has a root, then for any node $\al\in\mt$ we define a level of $\al$, denoted by $|\al|$, to be the length of the branch linking $\al$ and the root. 

Throughout the paper we use the standard notation and terminology concerning Banach space theory following  \cite{lt}, see also \cite{ak}.

\section{A  criterion}
Given a Banach space $X$ by $\mathscr{B}(X)$ we denote the Banach algebra of bounded (linear) operators on $X$. 

We shall consider operator ideals in  $\mathscr{B}(X)$ for some Banach space $X$, in this setting a set $I\subset\mathscr{B}(X)$ is an operator ideal provided it contains finite-dimensional operators,  it is both a linear subspace and a two-sided ideal in the  algebra $\mathscr{B}(X)$.

For any family $\mathscr{B}\subset \mathscr{B}(X)$ by $I(\mathscr{B})$ denote the closed operator ideal generated by $\mathscr{B}$. Recall that $I(\mathscr{B})$  is the closure in the operator norm of the set $\{\sum_{j=1}^lQ_j\circ R_j\circ S_j: Q_j,S_j\in\mathscr{B}(X), R_j\in\mathscr{B}, \    j=1,\dots, l, \ l\in\N\}$.

\

Let $X$ be a Banach space. Let $(e_n)_n\subset X$ be a normalized basic sequence and let $Y$ be the closed subspace spanned by $(e_n)_n$. Denote by $(e_n^*)_n\subset Y^*$ the sequence of biorthogonal functionals to $(e_n)_n\subset Y$.

\begin{notation}
Let $\mathscr{B}_{seq}(X,Y)\subset \mathscr{B}(X)$ denote the family of bounded operators on $X$ of the form 
\begin{align}\label{op}
T: X\ni x\mapsto\sum_{n=1}^\infty f_n(x)e_n\in Y\subset X
\end{align}
for some biorthogonal  basic sequences $(f_n,x_n)_{n\in\N}\subset X^*\times X$ with $(f_n)_n\subset B_{X^*}$ weakly$^*$ null and $(x_n)_n$ seminormalized (in particular $Tx_n=e_n$ and $e^*_n\circ  T=f_n$, $n\in\N$).
\end{notation}

\begin{remark}
Obviously any bounded operator $T:X\to Y$ is of the form \eqref{op} with $(f_n)_n\subset X^*$ bounded, the important part of the definition above concerns a seminormalized sequence $(x_n)_n$ biorthogonal to $(f_n)_n$. Any operator $T\in\mathscr{B}_{seq}(X,Y)$ we shall consider always with fixed sequences $(f_n,x_n)_n\subset X^*\times X$ associated to $T$ as above. 
\end{remark}

For any  $T\in\mathscr{B}_{seq}(X,Y)$ with the associated sequences  $(f_n,x_n)_n$ let 
\begin{align*}
c_k(T)&:=\limsup_{i_1<\dots<i_k, i_1, \dots,i_k\to\infty}\sup\left\{\|\epsilon_1 f_{i_1}+\dots+\epsilon_k f_{i_k}\|:\epsilon_1=\pm 1, \dots, \epsilon_k=\pm 1\right\}, \\ d_k(T)&:=\limsup_{i_1<\dots<i_k, i_1,\dots,i_k\to\infty}\|x_{i_1}+\dots+x_{i_k}\|, \ \ \text{ for any } k\in\N
\end{align*}

\begin{notation}
Let $\mathscr{S}_\infty(X,Y)$ denote the family of bounded operators $T:X\to Y$ with the following property: 
for any bounded sequence $(y_m)_m\subset X$ with $\|Ty_m\|_{Y,\infty}\xrightarrow{m\to\infty}0$ also $\|Ty_m\|\xrightarrow{m\to\infty}0$, where the norm $\|\cdot\|_{Y,\infty}$ on $Y$ is given by
 $ \|\cdot\|_\infty=\sup\{|e_n^*(\cdot)|:n\in\N\}$.
\end{notation}

\begin{remark}
Notice that for $X$ not containing an isomorphic copy of $c_0$ we have $\mathscr{S}_\infty(X,Y)\subset\mathscr{S}(X)$. On the other hand, if the sequence $(e_n)_n$ is equivalent to the unit vector basis of $c_0$, then any operator of the form \eqref{op} with $(f_n)_n\subset B_{X^*}$ belongs to $\mathscr{B}_{seq}(X,Y)\cap \mathscr{S}_\infty(X,Y)$. 
\end{remark}
With the abuse of notation we use $Y$ instead of its basis $(e_n)_n$ both in the case of $\mathscr{B}_{seq}(X,Y)$ and $\mathscr{S}_\infty(X,Y)$, however, it will be clear which basis of $Y$ we consider.

\begin{proposition} \label{main-criterion}
 Fix an operator $T\in\mathscr{B}_{seq}(X,Y)$ defined by $(f_n,x_n)_n$ and a non-empty family $\mathscr{B}\subset\mathscr{B}_{seq}(X,Y)\cap \mathscr{S}_\infty(X,Y)$.  Assume 
 \begin{align}\label{eq-main-criterion}
  \inf_{k\in\N}\frac{c_k(R)d_k(T)}{k}=0 \ \ \ \text{ for any }R\in\mathscr{B}
 \end{align}
  Then $\|T-Q\|\geq 1/d$ for any $Q$ in the operator ideal $I(\mathscr{B})$ generated by $\mathscr{B}$, where $d:=\sup_n\|x_n\|$.
 \end{proposition}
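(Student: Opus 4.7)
The plan is to show that $\liminf_n\|Qx_n\|=0$ for every $Q\in I(\mathscr{B})$, from which the proposition follows: choosing $\psi_n\in X^*$ with $\|\psi_n\|=1$ and $\psi_n(e_n)=1$ via Hahn-Banach gives
\begin{align*}
\|(T-Q)x_n\|\ge |\psi_n((T-Q)x_n)|\ge 1-\|Qx_n\|,
\end{align*}
and dividing by $\|x_n\|\le d$ and taking the limsup along a subsequence where $\|Qx_n\|\to 0$ yields $\|T-Q\|\ge 1/d$.

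First I would approximate $Q$ within $\varepsilon$ in operator norm by a finite sum $\widetilde Q=\sum_{\alpha=1}^l P_\alpha R_\alpha S_\alpha$ with $R_\alpha\in\mathscr{B}$; since $\|(Q-\widetilde Q)x_n\|\le \varepsilon d$, the approximation error is absorbed in the final estimate. The triangle inequality, together with a diagonal argument across the finitely many summands, reduces the task to proving, for each $R\in\mathscr{B}$ and bounded $S\in\mathscr{B}(X)$, the existence of a subsequence $(n_k)$ along which $\|RSx_{n_k}\|\to 0$.

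The property $R\in\mathscr{S}_\infty(X,Y)$ permits a further reduction: it suffices to produce $(n_k)$ for which the bounded sequence $u_k:=Sx_{n_k}$ satisfies
\begin{align*}
\|Ru_k\|_\infty=\sup_m\bigl|(S^*f_m^R)(x_{n_k})\bigr|\xrightarrow{k\to\infty}0,
\end{align*}
after which the $\mathscr{S}_\infty$ hypothesis automatically upgrades this to $\|Ru_k\|_Y\to 0$. The compatibility condition $\inf_k c_k(R)d_k(T)/k=0$ enters through the $\ell_1$-duality
\begin{align*}
\sup_{\|x\|\le 1}\sum_{j=1}^k\bigl|f_{m_j}^R(x)\bigr|=\sup_{\epsilon_j=\pm 1}\Bigl\|\sum_{j=1}^k\epsilon_j f_{m_j}^R\Bigr\|,
\end{align*}
which, applied for large tuples $m_1<\dots<m_k$ and for indices $i_1<\dots<i_k$ chosen so that $\|x_{i_1}+\dots+x_{i_k}\|\le d_k(T)(1+o(1))$ (possible by the $\limsup$ definition of $d_k(T)$), yields
\begin{align*}
\sum_{j=1}^k\bigl|(S^*f_{m_j}^R)(x_{i_1}+\dots+x_{i_k})\bigr|\le \|S\|\,c_k(R)\,d_k(T)(1+o(1)).
\end{align*}
Picking $k$ with $c_k(R)d_k(T)/k$ arbitrarily small forces all but at most $k-1$ of the large-index coefficients of $R(S(x_{i_1}+\dots+x_{i_k}))$ to be negligible; combined with the weak-$*$ nullity of $(S^*f_m^R)$, which controls the finitely many small-index positions, a tree/Ramsey extraction in the framework recalled in the introduction pinpoints within each admissible tuple a single index $n$ for which $\sup_m|(S^*f_m^R)(x_n)|$ is as small as desired, and iterating over shrinking thresholds produces the required subsequence.

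The main obstacle, in my view, is the passage from the \emph{collective} control on the $k$-fold sums of coefficients to the \emph{pointwise} sup-norm control on the individual $RSx_n$: the sum bound alone leaves up to $k-1$ exceptional large-index positions whose coefficients may be comparable to $c_k(R)d_k(T)$, so a careful combinatorial selection within each tuple, coupled with a diagonalization across the summands $\alpha$ and iterative refinements tightening $\varepsilon$, is needed to isolate an individual index meeting the desired bound. Once this is achieved, the $\mathscr{S}_\infty$ hypothesis and the Hahn-Banach lower estimate close the argument.
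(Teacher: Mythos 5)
Your overall framework --- reduce to a single composite $R\circ S$ with $R\in\mathscr{B}$, use the $\mathscr{S}_\infty$ property to pass between $\|\cdot\|_{Y,\infty}$ and $\|\cdot\|_Y$, and play the hypothesis $\inf_k c_k(R)d_k(T)/k=0$ against an $\ell_1$-type duality --- is the same as the paper's, and your opening and closing reductions (the $\varepsilon$-approximation of $Q$, the diagonalization over the finitely many summands, the Hahn--Banach lower bound) are sound. But the step you yourself flag as ``the main obstacle'' is not a technical loose end to be deferred: it is the entire content of the proposition, and the inequality you propose to attack it with cannot do the job. Each term in your key display is $|f^R_{m_j}(S(x_{i_1}+\dots+x_{i_k}))|=|\sum_{t}f^R_{m_j}(Sx_{i_t})|$, so the inner sum over $t$ admits cancellation; the bound $\|S\|\,c_k(R)\,d_k(T)(1+o(1))$ is perfectly compatible with every individual entry $|f^R_m(Sx_{i_t})|$ being of order $1$ (think of entries alternating in sign with $t$). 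Hence the claim that ``all but at most $k-1$ of the large-index coefficients are negligible'' does not follow, and no selection of a single good index within the tuple can be extracted from this estimate.

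The paper runs the argument in the opposite (contradiction) direction precisely to avoid this cancellation: assuming $((R\circ S)x_n)_n$ stays seminormalized, the $\mathscr{S}_\infty$ property yields for each $n$ a witnessing coordinate $l_n$ and a sign $\epsilon_n$ with $\epsilon_n g_{l_n}(Sx_n)\geq c$, and then two successive subsequence extractions make the off-diagonal entries $g_{l_n}(Sx_m)$, $n\neq m$, absolutely summable. One extraction uses weak$^*$-nullity of the functionals defining $R$; the other --- showing $g_{l_n}(Sx_m)\to 0$ as $m\to\infty$ for fixed $n$ --- itself requires the hypothesis \eqref{eq-main-criterion}, via the observation that a single functional acting by at least $\delta$ on infinitely many $Sx_m$ would force $d_k(T)\gtrsim k$ and hence $\inf_k c_k(R)d_k(T)/k>0$. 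Only after this preparation does testing $\sum_{n\in A}\epsilon_n g_{l_n}$ against $\sum_{n\in A}Sx_n$ produce a dominant diagonal, giving $c_k(R)d_k(T)/k\geq c/(8\|S\|)$ for all $k$ and the desired contradiction. These are the missing ingredients; as written, your proposal reduces the problem to essentially itself.
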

\begin{remark}
It is not difficult to notice that operators $T,R\in\mathscr{B}_{seq}(X,Y)$ with \eqref{eq-main-criterion} satisfy $\|T-R\|\geq\frac{1}{d}$, dealing with operators in $\mathscr{S}_\infty(X,Y)$ allows separating not only operators, but also operators from operator ideals. 
\end{remark}

\begin{proof}[Proof of Prop. \ref{main-criterion}]
Take operators as in the theorem. 

Assume there is $Q\in I(\mathscr{B})$ with $\|T-Q\|<\frac{1}{d}$. Then there are $R_j\in\mathscr{B}$, $Q_j,S_j\in\mathscr{B}(X)$, $j=1,\dots,l$ such that
$\|T-\sum_{j=1}^lQ_j\circ R_j\circ S_j\|< \frac{1}{d}$. As $Tx_n=e_n$ and $(x_n)_n$ is bounded, $n\in \N$, passing to a subsequence if necessary, we find $j_0\in\{1,\dots,l\}$ such that the sequence $((Q_{j_0}\circ R_{j_0}\circ S_{j_0})x_n)_n$, is seminormalized. It follows that  $((R_{j_0}\circ S_{j_0})x_n)_n$ is seminormalized as well.

In order to simplify the notation write $R:=R_{j_0}$,  $S:=S_{j_0}$. Let $(g_n)_n\subset X^*$ be the seminormalized basic sequence defining $R$. As $R\in\mathscr{S}_\infty(X,Y)$ there is universal  $c>0$ with $\sup_{l\in\N}|e_l^*(R(Sx_n))|>c$ for all $n$. For any $n\in \N$ pick $l_n\in \N$ and $\epsilon_n=\pm 1$ so that $\epsilon_ng_{l_n}(Sx_n)=\epsilon_ne_{l_n}^*(R(Sx_n))\geq c$ for each $n$.

Notice that for any fixed $n\in\N$ we have $g_{l_n}(Sx_m)\xrightarrow{m\to\infty}0$ $(*)$. Indeed, assume $|g_{l_n}(Sx_m)|>\de$, for some universal $\delta$ and any $m\in M$, with some infinite $M\subset\N$. Without loss of generality we assume $g_{l_n}(Sx_m)>\de$, $m\in M$. Thus for any $k\in\N$ and any $A\subset M$ with $\# A=k$ we have 
$$
\|S\|\|\sum_{m\in A}x_m\|\geq \|S(\sum_{m\in A}x_m)\|\geq g_{l_n}(\sum_{m\in A}Sx_m)\geq k \de
$$ 
Therefore $\liminf_{k\in\N}\frac{d_k(T)}{k}\geq \delta\|S\|^{-1}>0$, which contradicts the assumption on $(d_k(T))_k$ (as $\liminf_kc_k(R)>0$).

It follows that we can pass to subsequence of $(x_n)_n$ on which the mapping $n\mapsto l_n$ is an injection, thus 
$g_{l_n}(Sx_m)\xrightarrow{n\to\infty}0$ for any $m\in\N$ $(**)$ (recall $R\in\mathscr{B}_{seq}(X,Y)$, thus $(g_{l_n})_n$ is weakly$^*$ null).

Using $(*)$ and $(**)$ we easily pass to a further subsequence $(x_n)_{n\in L}$ such that $|g_{l_n}(Sx_m)|\leq \frac{c}{2^{n+m}}$ for any $n\neq m$, $n,m\in L$.
Now for any $k\in\N$ choose $A\subset L$, $\# A= k$, with $\min A>\frac{2}{c}$ big enough to guarantee $\|\sum_{n\in A}\pm g_{l_n}\|\leq 2c_k(R)$ and $\|\sum_{n\in A}x_n\|\leq 2d_k(T)$. Estimate 
\begin{align*}
\|S\|2d_k(T)&\geq\|S\|\|\sum_{n\in A}x_n\|
\geq\|\sum_{n\in A}Sx_n\|\\
&\geq\frac{1}{2c_k(R)}\sum_{n\in A}\epsilon_n g_{l_n}\left(\sum_{n\in A}Sx_n\right)\\
& = \frac{1}{2c_k(R)}\sum_{n\in A}\epsilon_n g_{l_n}(Sx_n) + \frac{1}{2c_k(R)}\sum_{n\neq m\in A}\epsilon_n g_{l_n}(Sx_m)\\
&\geq c \cdot\frac{k}{2c_k(R)}-\frac{1}{2c_k(R)}\cdot\frac{c}{2}\geq \frac{c}{4} \cdot\frac{k}{c_k(R)}
\end{align*}
Thus for any $k\in\N$
$$
\frac{c_k(R)d_k(T)}{k}\geq \frac{c}{8\|S\|}
$$
which contradicts \eqref{eq-main-criterion} and thus ends the proof of the theorem.
\end{proof}

\section{Schlumprecht space}

\subsection{Basic definitions and properties}

We recall  the definition of the Schlumprecht space $X_S$ \cite{sch}. Let $K\subset c_{00}$ be the smallest set containing unit vectors $\{\pm e_n^*:n\in\N\}$ and such that for any $n\leq m$ and any block sequence $f_1<\dots<f_n$ of elements of $K$ also the weighted average $\frac{1}{\log_2(m+1)}(f_1+\dots+f_n)$ belongs to $K$. 
The set $K$ defines a norm $\|\cdot\|$ on $c_{00}$ as its norming set, i.e. $\|\cdot\|=\sup_{f\in K}|f(\cdot)|$, where for any $f=(a_n)_n\in K$ and $x=(b_n)_n\in c_{00}$ we have $f(x)=\sum_na_nb_n$.
The Schlumprecht space $X_S$ is defined as the completion of $(c_{00},\|\cdot\|)$.

\begin{remark}\label{schlum}
\begin{enumerate}
    \item It follows straightforward from the definition of the Schlum\-precht space that the unit vector basis $(e_n)_n$ is 1-unconditional and 1-subsymmetric (i.e. 1-equivalent to any of its subsequences). 
\item By definition of the norming set $K$ any $f\in K\setminus\{\pm e^*_n:n\in\N\}$ has a tree-analysis $(f_\gamma)_{\gamma\in\mathcal{R}}\subset K$ where $\mathcal{R}$ is a well-founded finitely-branching tree with the root $\emptyset$, $f_\emptyset=f$, $f_\gamma=\frac{1}{\log_2(m_\gamma+1)}\sum_{\zeta\in\suc(\gamma)}f_\zeta$, $m_\gamma\geq \#\suc(\gamma)$, for any non-terminal $\gamma\in\mathcal{R}$, $f_\gamma=\pm e_{n_\gamma}$ for any terminal $\gamma\in\mathcal{R}$. In such situation a weight $\weight(f)$  of $f$ is given by $\weight(f)=m_\emptyset$.

\item \cite{sch} We have $\|e_1+\dots+e_k\|=\frac{\log_2(k+1)}{k}$ for any $k\in\N$. Thus for any $h\in K$ we have $\#\{i\in\supp h: |h(e_i)|\geq \log_2(k+1)^{-1}\}\leq k$ for any $k\in\N$.
\end{enumerate}
\end{remark}

We recall now a notion from \cite{mp2}, implicitly used in \cite{as}. 

\begin{definition} A core tree is any finitely branching tree $\mt\subset\cup_n\N^n$  with no terminal nodes and a root $\emptyset$, considered with the tree order $\preceq$ and the lexicographic order $\leq_{lex}$. For any $n\in\N$ let $\mt_n=\{\al\in\mt: |\al|\leq n\}$. For any $\al\in\mt$ let $m_\al=\#\suc (\al)$.
\end{definition}

\begin{notation}

We enumerate nodes of $\mt$ according to the lexicographic order as $(\al_j)_{j\in\N}$, starting from $\al_0=\emptyset$. For simplicity  we write $m_j=m_{\al_j}$,  $j\in\N$.

For any $\al\in\mt$, $\al\neq\emptyset$,  let $c_\al=\prod_{\gamma\prec\al}\log_2(m_\gamma+1)^{-1}$ and $d_\al=\prod_{\gamma\prec\al}\log_2(m_\gamma+1)m_\gamma^{-1}$. For simplicity we  write $c_j=c_{\al_j}$, $d_j=d_{\al_j}$, $j\in\N$.

\end{notation}

\begin{remark} Notice that any sequence of parameters $(m_j)_{j\in\N}\subset\N\setminus\{0\}$ defines a unique core tree $\mt$, for which  $\#\suc(\al_j)=m_j$ for any $j\in\N$, with the enumeration $(\al_j)_{j\in\N}$ of elements of $\mt$ according to the order $\leq_{lex}$.
\end{remark}

The definition below recalls the notion of repeated averages of \cite{as}.

\begin{definition}\label{RIS-tree-an}\cite{as}
We say that a vector $x\in X_S$ has a tree-analysis $(x_\al)_{\al\in\mt_n}\subset X_S$ (of height $n\in\N$) with a core tree $\mt$ with associated parameters $(m_\al)_{\al\in\mt}\subset\N$, if the following hold:
\begin{enumerate}
\item $x_\emptyset=x$,
\item for any terminal node $\al\in\mt_n$ we have  $x_\al=e_{t_\al}$ for some $t_\al\in \N$,
\item for any non-terminal node $\al\in\mt$ the vector $x_\al$ is a seminormalized $m_\al$-average of
$(x_\beta)_{\beta\in\suc(\al)}$ of the form $x_{\al}=\frac{\log_2(m_\al+1)}{m_\al}\sum_{\beta\in\suc(\al)}x_\beta$, 
\item for any nodes $\al,\beta\in\mt$ with $|\al|=|\beta|$, $\al\leq_{lex}\beta$ we have $x_\al<x_\beta$. 
\end{enumerate}
\end{definition}

\begin{lemma}\cite[Prop. 2.3, Lemma 2.10]{mp2}\label{lemma-RIS-tree}
 Let $\mt$ be a core tree with the associated sequence of parameters $(m_j)_{j\in\N}$. If for some $(q_j)_{j\in\N}\subset\N$ the sequence $0<m_0<q_0<m_1<q_1<\dots$ increases fast enough, then for any tree-analysis $(x_\al)_{\al\in\mt_n}$, $n\in\N$, of a vector $x\in X_S$ we have
 \begin{enumerate}
 \item[(F1)] $1\leq\|x_\al\|\leq 2$ for any $\al\in\mt_n$,
\item[(F2)] $|h(x_{\al_j})|\leq \frac{2}{\log_2(m_j+1)}$ for any $h\in K$ with $\weight(h)\geq q_j$ and any $\al_j\in\mt_n$.
 \end{enumerate}
\end{lemma}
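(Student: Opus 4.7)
The plan is to prove (F1) and (F2) simultaneously by induction on the height $n-|\al|$ of the subtree-analysis rooted at $\al$, with the growth rate in the condition $0<m_0<q_0<m_1<q_1<\cdots$ chosen so that every inductive step closes with the stated constants. This is the rapidly-increasing-sequence (RIS) technique of \cite{sch} transposed to the tree framework of \cite{mp2}, and the plan essentially recapitulates the argument given there.

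I would dispose of the lower bound in (F1) first by an upward induction from the terminal nodes of $\mt_n$. At a terminal $\al$ one has $\|x_\al\|=\|e_{t_\al}\|=1$. At a non-terminal $\al$, choose $f_\beta\in K$ norming $x_\beta$ for $\beta\in\suc(\al)$; by condition (4) of Definition \ref{RIS-tree-an} the $x_\beta$'s are successive blocks, so after restricting supports the $f_\beta$'s are also successive and $\frac{1}{\log_2(m_\al+1)}\sum_{\beta\in\suc(\al)}f_\beta\in K$. Evaluating on $x_\al=\frac{\log_2(m_\al+1)}{m_\al}\sum_\beta x_\beta$ yields $\|x_\al\|\geq\frac{1}{m_\al}\sum_\beta f_\beta(x_\beta)\geq 1$.

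The core of the proof is (F2), from which the upper bound in (F1) then follows. Given $\al_j$ and $h\in K$ with $\weight(h)\geq q_j$, I would fix a tree-analysis $(h_\zeta)_\zeta$ of $h$ as in Remark \ref{schlum}(2), write $h=\frac{1}{\log_2(w+1)}\sum_\zeta h_\zeta$ with $w=\weight(h)$, and distribute the estimate of $h(x_{\al_j})$ along a case split on the weights of the individual $h_\zeta$ relative to the parameters of the subtree rooted at $\al_j$. For subfunctionals of large weight, $\weight(h_\zeta)\geq q_{j'}$ for some descendant $\al_{j'}$ of $\al_j$, the inductive hypothesis supplies the decay $\frac{2}{\log_2(m_{j'}+1)}$; for subfunctionals of intermediate weight the rapid growth $q_j\gg m_j$ ensures that only boundedly many of the $x_\beta$, $\beta\in\suc(\al_j)$, are met, and Remark \ref{schlum}(3) bounds the number of $\beta$'s on which the outer coefficient exceeds $\frac{1}{\log_2(m_j+1)}$; summing all contributions delivers the target $\frac{2}{\log_2(m_j+1)}$. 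The upper bound $\|x_\al\|\leq 2$ then falls out by applying the same dichotomy to an $h\in K$ nearly norming $x_\al$, decomposing $h$ via its own tree-analysis and splitting according to the $q_j$.

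The main obstacle is the calibration of ``fast enough'': the ratio $q_j/m_j$ must be chosen large enough to render the intermediate-weight pieces of $h$ negligible in the summation, while $m_{j+1}/q_j$ must be large enough that the constants inherited from the subtree bounds do not accumulate past the threshold $2$ under the $\frac{\log_2(m_\al+1)}{m_\al}$-renormalization in the definition of $x_\al$. This bookkeeping is precisely what is carried out in \cite[Prop.~2.3 and Lemma~2.10]{mp2}, and I would invoke it directly rather than reproduce the estimates in full.
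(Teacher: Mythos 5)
The paper gives no proof of this lemma at all: it is quoted verbatim from \cite[Prop.~2.3, Lemma~2.10]{mp2} and used as a black box, which is exactly what you end up doing after your (correct) sketch of the lower bound in (F1) and of the weight-splitting induction behind (F2). Your outline is a faithful account of the RIS argument carried out in \cite{mp2}, so your treatment matches the paper's.
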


\begin{remark}\label{remark-RIS-tree}
The precise conditions guaranteeing the fast increase of $(m_j,q_j)_j$ are given in \cite{mp2}. The important feature of this notion is that it is "finitely defined" in a sense that if a sequence $m_0<q_0<\dots<m_s<q_s$ can be extended to fast increasing sequence (required in Lemma \ref{lemma-RIS-tree}), then any sequence $m_0<q_0<\dots<m_s<q $, with $q>q_s$, as well, and the same holds true for sequences of the form $m_0<q_0<\dots<q_{s-1}<m_s$.
\end{remark}

\begin{remark}\label{remark-l1-averages}
In the situation as above it follows that any $x_\al$ is an $\ell_1^r$-average with constant 3, with $r=r(m_\al)\to \infty$ as $m_\al\to\infty$. Thus by standard reasoning for any $\al\in\mt$ there is some $q=q(m_\al)$  such that for any $E_1<\dots<E_q$, we have $\sum_{s=1}^q\|E_sx_\al\|\leq 4$, with $q(m_\al)\to\infty$ as $m_\al\to\infty$ (see
\cite{sch}).
\end{remark}

\begin{definition}
Given a vector  $x$ with a tree-analysis $(x_{\al})_{\al\in \mt_n}$ as in Def. \ref{RIS-tree-an} we define in a natural way \textit{an associated norming functional}
 $f\in K$ with a tree-analysis $(f_\al)_{\al\in\mt_n}$ as follows (with the notation of Def. \ref{RIS-tree-an}):
\begin{enumerate}
\item $f_\emptyset=f$,
\item for any terminal node $\al\in\mt$ we set $f_\al=e_{t_\al}^*$,
\item for any non-terminal node $\al\in\mt$ we set  $f_{\al}=\frac{1}{\log_2(m_\al+1)}\sum_{\beta\in\suc(\al)}f_\beta$.
\end{enumerate}
\end{definition}
In the situation above obviously any $f_\al$, $\al\in\mt_n$, of the above form is in the norming set of $X_S$, thus in particular $f\in K$, and $f(x)=1$.

\subsection{Block sequences of vectors and functionals defined by a core tree}

For the rest of this section we fix a core tree $\mt$ with parameters $(m_j)_j$, a block sequences $(x_n)_n\subset X_S$ and $(f_n)_n\subset X_S^*$ defined by $\mt$, i.e. such that each  $x_n$ has a tree-analysis $(x_\al^n)_{\al\in\mt_n}$ and each $f_n$ is an associated functional to $x_n$ with a tree-analysis $(f^n_\al)_{\al\in\mt_n}$. We shall estimate in this section finite sums of $(x_n)_n$ and $(f_n)_n$ under certain conditions on parameters associated to the tree $\mt$.

For a sequence of parameters $(m_j,q_j)_j$ with $0<m_0<q_0<m_1<q_1<\dots$ we define the following conditions of the fast growth: for any tree-analysis $(x_\al)_{\al\in\mt_n}$, $n\in\N$, of a vector $x\in X_S$ we require
\begin{enumerate}
  \item[(F3)] $\sum_{s=1}^{q_{j-1}}\|E_sx^n_{\al_j}\|\leq 4$ for any $E_1<\dots<E_{q_{j-1}}$,  any $j\in\N$ and $n\geq |\al_j|$,
 \item[(F4)] $\log_2(m_j+1)\geq 2\log_2(m_{j-1}+1)$ for all $j\in\N$,
\item[(F5)] $\# \{\al: |\al|=N\}(=\sum_{\al: |\al|=N-1}m_\al)\leq \log_2(q_{s_N}+1)$, where $\al_{s_N}$ is maximal in $\{\al: |\al|=N-1\}$ with respect to $\leq_{lex}$,
\end{enumerate}

\begin{notation}
 As in the case of $(m_\al)_{\al\in\mt}$ we  write  $q_{\al_j}=q_j$ for any  $j\in\N$.
\end{notation}

\begin{remark}\label{rem-fast-growth} Notice that it is possible to construct inductively $(m_j,q_j)_j$ satisfying (F1)-(F5). Indeed, at each step choose $q_j$, $m_j$ big enough to ensure conditions (F1) and (F2) are satisfied (cf. Remark \ref{remark-RIS-tree}), and moreover, given $m_0<q_0<\dots<m_{j-1}<q_{j-1}$ choose $m_j$ big enough so that (F3) is satisfied (according to Remark \ref{remark-l1-averages}) as well as (F4). Additionally, given $m_0<q_0<\dots<m_j$, with $j=s_N$ for some $N\in\N$, choose $q_j$ so that (F5) is satisfied.  Moreover, the choice of $(m_j,q_j)_j$ is "finitely defined" in the sense of Remark \ref{remark-RIS-tree}.
\end{remark}

\begin{proposition}\label{est-vectors}
 Let $(x_n)_n\subset X_S$ be a block sequence defined by a core tree $\mt$ with $(m_j,q_j)_j$ satisfying (F1)-(F5). Fix $j_0\in\N$ so that $\al_{j_0}$ is minimal in $\{\al\in\mt: |\al|=|\al_{j_0}|\}$ with respect to $\leq_{lex}$. Then for any $q_{j_0-1}\leq k\leq m_{j_0}$ and $j_0<n_1<\dots<n_k$ we have
 $$
\frac{1}{2} kd_{j_0}\leq \|x_{n_1}+\dots +x_{n_k}\|\leq 14k d_{j_0}
$$
\end{proposition}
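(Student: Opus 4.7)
The plan is to prove the two inequalities by rather different methods: the lower bound by exhibiting a single explicit norming functional built from the $f^{n_i}_{\al_{j_0}}$'s, and the upper bound by a tree-analysis of an arbitrary $f\in K$ combined with the fast-growth conditions (F2)--(F5).

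For the lower bound, I take
$$F := \frac{1}{2}\sum_{i=1}^{k} f^{n_i}_{\al_{j_0}}.$$
Expanding each summand via the identity $f^{n_i}_{\al_{j_0}} = \frac{1}{\log_2(m_{j_0}+1)}\sum_{\beta\in\suc(\al_{j_0})} f^{n_i}_\beta$, I rewrite
$$\sum_{i=1}^k f^{n_i}_{\al_{j_0}} = \frac{\log_2(k m_{j_0}+1)}{\log_2(m_{j_0}+1)} \cdot h,$$
where $h := \frac{1}{\log_2(km_{j_0}+1)}\sum_{i,\beta} f^{n_i}_\beta$ lies in $K$ (it has $km_{j_0}$ block summands in $K$ and matching averaging weight $km_{j_0}$). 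Since $k\le m_{j_0}$ gives $\log_2(km_{j_0}+1)\le 2\log_2(m_{j_0}+1)$, I obtain $\|F\|_{X_S^*}\le 1$. Disjointness of block ranges then yields $F(\sum_j x_{n_j}) = \frac{1}{2}\sum_i f^{n_i}_{\al_{j_0}}(x_{n_i})$, and the level-$j_0$ expansion $x_{n_i} = \sum_{|\al|=j_0} d_\al x^{n_i}_\al$ combined with $f^{n_i}_{\al_{j_0}}(x^{n_i}_{\al_{j_0}}) = 1$ and the disjointness of the supports of the $x^{n_i}_\al$'s at level $j_0$ gives $f^{n_i}_{\al_{j_0}}(x_{n_i}) = d_{j_0}$. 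Hence $F(\sum_j x_{n_j}) = \frac{k d_{j_0}}{2}$.

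For the upper bound, I fix $f\in K$ (non-negative by $1$-unconditionality) and use the same level-$j_0$ expansion $\sum_i x_{n_i} = \sum_{|\al|=j_0} d_\al \sum_i x^{n_i}_\al$. The minimality of $\al_{j_0}$ at its level together with (F4), which makes the sequence $(m_j)$ increasing, implies $d_\al \le d_{j_0}$ for every $|\al|=j_0$: the ancestors of $\al_{j_0}$ carry the smallest $j$-indices, hence the smallest $m$-values, contributing the largest factors $\log_2(m+1)/m$ in the product defining $d_\al$. The problem thus reduces to bounding $\sum_{|\al|=j_0}\sum_{i=1}^k |f(x^{n_i}_\al)|$ by $14k$. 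I split the tree-analysis $(f_\gamma)_\gamma$ of $f$ by weight threshold: sub-functionals of $\weight(f_\gamma)\ge q_{|\al|}$ contribute $|f_\gamma(x^{n_i}_\al)|\le 2/\log_2(m_\al+1)$ via (F2), while those of smaller weight interact with each $x^{n_i}_\al$ through fewer than $q_{|\al|-1}$ block intervals and are absorbed via (F3), giving $\sum_s \|E_s x^{n_i}_\al\|\le 4$. Combined with (F5), which controls the number of level-$j_0$ nodes, all contributions add up to at most $14k$.

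The main technical obstacle is this last case analysis: the simultaneous tracking of the weight of each sub-functional $f_\gamma$ against the thresholds $q_{|\al|}$ for each level-$j_0$ node $\al$, together with the support-interval bookkeeping, is the standard ``basic inequality'' technology for mixed-Tsirelson-type spaces adapted here to the core-tree analysis. The precise calibration of the constants in (F1)--(F5) is what ultimately allows all the error terms to absorb into the universal constant $14$.
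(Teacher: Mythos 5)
Your lower bound is correct and is essentially the paper's argument in dual form: the paper restricts $\sum_i x_{n_i}$ to the supports of the $x^i_{\al_{j_0}}$ and applies $\frac{1}{\log_2(km_{j_0}+1)}\sum_{i,\beta}f^i_\beta\in K$, which is exactly your functional $F$ up to the factor $\tfrac12$ coming from $\log_2(km_{j_0}+1)\le 2\log_2(m_{j_0}+1)$. Your reduction of the upper bound to the unweighted claim $\sum_{|\al|=N}\sum_i|f(x^{n_i}_\al)|\le 14k$ (via $d_\al\le d_{j_0}$, which does hold by the $\leq_{lex}$-minimality of $\al_{j_0}$ and the monotonicity of $m\mapsto\log_2(m+1)/m$) is a legitimate simplification, since the paper's own estimates only ever use $d_j\le d_{j_0}$.

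The gap is in the proof of that reduced claim. Your two-threshold split is not exhaustive and does not close. First, for each pair $(i,\al_j)$ the relevant weight is that of the node of the tree-analysis of $f$ \emph{covering} $x^{n_i}_{\al_j}$ (after first replacing $f$ by a compatible functional at the cost of a universal constant, as in [MP]); the thresholds are $q_{j-1}$ and $q_j$ attached to the enumeration index $j$ of $\al_j$, not $q_{|\al|}$. Second, and crucially, the intermediate case $q_{j-1}<\weight<q_j$ is covered by neither (F2) nor (F3), and it contains the dangerous functionals: $f^{n_i}_{\al_j}$ itself has weight $m_j\in(q_{j-1},q_j)$ and acts with full efficiency on $x^{n_i}_{\al_j}$. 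The paper handles this with the sets $E',E''$ (a counting argument showing at most $k$ covering nodes are shared, which in turn needs the auxiliary set $C$ of pairs with a high-weight ancestor). Third, (F3) does not by itself ``absorb'' the low-weight case: it only shows that replacing the successors of a low-weight covering node by the single functional $f^{n_i}_{\al_j}$ costs a factor $4$, i.e.\ it reduces cases $A$ and $E'$ to estimating the norm of the flattened vectors $y_1+\dots+y_k$ with $y_i=\sum_{|\al_j|=N}d_{\al_j}e_{t_j}$. That second-stage estimate $\|y_1+\dots+y_k\|\le 2kd_{j_0}$ is a separate, non-trivial argument using Remark~\ref{schlum}(3) together with $k\ge q_{j_0-1}$ (for the large coordinates of $h$) and (F5) (for the small ones); note this is the only place the hypothesis $k\ge q_{j_0-1}$ enters, and your sketch never uses it. As written, summing the bound $4/\log_2(\weight+1)$ from (F3) over all low-weight pairs gives only $O(k\cdot\#\{\al:|\al|=N\})$, not $O(k)$. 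So the sentence ``all contributions add up to at most $14k$'' conceals precisely the part of the proof that carries the content.
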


\begin{proof}

As the bases of $X_S$ and $X_S^*$ are 1-unconditional, and the vectors $(x^n_\al)$ have non-negative coefficients, we can assume that the functionals analysed below have also non-negative coefficients.

Let $N:=|\al_{j_0}|$.  Fix $k\in\N$ and $n_1,\dots,n_k$ as in the proposition. Let each $x_{n_i}$ has the tree-analysis $(x_\al^i)_{\al\in\mt_{n_i}}$. 

For the left estimate consider the associated functionals $(f_{n_i})_i$ with corresponding tree-analysis $(f^i_\al)_{\al\in\mt_{n_i}}$, $i=1,\dots,k$,  and estimate using unconditionality and (F4) as follows
\begin{align*}
 \|x_{n_1}+\dots+x_{n_k}\|&\geq d_{j_0}\|x^1_{\al_{j_0}}+\dots+x^k_{\al_{j_0}}\|\\
 & \geq d_{j_0} \frac{1}{\log_2(k m_{j_0}+1)}\sum_{i=1}^k\sum_{\beta\in\suc(\al_{j_0})}f_\beta^i(x^i_{\al_{j_0}})\\
 & \geq \frac{1}{2} d_{j_0}\sum_{i=1}^k \frac{1}{\log_2(m_{j_0}+1)}\sum_{\beta\in\suc(\al_{j_0})}f_\beta^i(x^i_{\al_{j_0}})\\
 &= \frac{1}{2} d_{j_0}\sum_{i=1}^k f^i_{\al_{j_0}}(x^i_{\al_{j_0}})\\
 &= \frac{1}{2} kd_{j_0}.
\end{align*}

For the right estimate we show first that
\begin{align}\label{est1}
 \|x_{n_1}+\dots +x_{n_k}\|\leq 4k d_{j_0} + 5\|y_1+\dots+y_k\|
\end{align}
 where $(y_1,\dots,y_k)$ is a block sequence of shifted copies of the vector $x_{N}=\sum_{|\al_j|=N}d_{\al_j}e_{t_j}$ and for such $(y_1,\dots,y_k)$ we prove
 \begin{align}\label{est2}
  \|y_1+\dots+y_k\|\leq 2k d_{j_0}
 \end{align}

For \eqref{est1} take a functional $\hat{h}\in K$ with its tree-analysis $(\hat{h}_\gamma)_{\gamma\in\hat{\mathcal{R}}}$. By \cite{mp} we can pick $h\in K$ with $|\hat{h}(\sum_i\sum_{|\al|=N} x_\al^i)|\leq 6h(\sum_i\sum_{|\al|=N} x_\al^i)$ and a tree-analysis $(h_\gamma)_{\gamma\in\mathcal{R}}$ compatible with the block sequence $(x^i_{\al}: |\al|=N, i=1,\dots, k)$ (meaning that for any $\gamma\in\mathcal{R}$, $\al\in \mt$ with $|\al|=N$ and $i=1,\dots,k$ we have either $\ran x_\al^i\cap\ran h\subset\ran h_\gamma$, $\ran x_\al^i\cap\ran h\supset\ran h_\gamma$ or $\ran x_\al^i\cap\ran h_\gamma=\emptyset$). We want to estimate
$$
h(x_{n_1}+\dots+x_{n_k})=\sum_{\al\in\mt, |\al|=N}d_\al h(x^1_\al+\dots+x^k_\al)
$$

In the sequel we consider only those $(i,j)$, $|\al_j|=N$, for which $h(x^i_{\al_j})\neq 0$. For any $(i,j)$, $|\al_j|=N$, let $\gamma_{i,j}\in\mathcal{R}$ be the node with $h_{\gamma_{i,j}}$ covering $x^i_{\al_j}$ (i.e. $\gamma_{i,j}$ is maximal in $\mathcal{R}$ with $\ran h_\gamma\supset \ran h\cap \ran x_{\al_j}^i$).  

Let $A=\{(i,j): i=1,\dots,k, |\al_j|=N, \weight(h_{\gamma_{i,j}})\leq q_{j-1}\}$. Then by (F3)
$$
|\sum_{\beta\in\suc(\gamma_{i,j})}h_\beta(x^i_{\al_j})|\leq 4, \text{ for each } (i,j)\in A
$$
thus we can replace in the tree-analysis of $h$ all successors of $h_{\gamma_{i,j}}$ with range intersecting the range of $x^i_{\al_j}$ by one functional $f^i_{\al_j}$, paying the cost of multiplying the action of $h$ on $x_{n_1}+\dots+x_{n_k}$ by 4. 
Thus
\begin{align}
 |h(\sum_{(i,j)\in A}d_j x^i_{\al_j})|\leq 4 \|y_1+\dots+y_k\|
\end{align}

Let $B=\{(i,j): i=1,\dots,k, |\al_j|=N, \weight(h_{\gamma_{i,j}})\geq q_j\}$. Then for any $(i,j)\in B$ by (F2)  we obtain
$$
|h_{\gamma_{i,j}}(x^i_{\al_j})|\leq \frac{2}{\log_2(m_j+1)}
$$
Thus by (F4)
\begin{align}
 |h(\sum_{(i,j)\in B}d_j x^i_{\al_j})|\leq 2k\sum_{|\al_j|=N}\frac{d_{j_0}}{\log_2(m_j+1)}\leq 2kd_{j_0}
\end{align}

We consider one more set which can have non-empty intersection with the previous ones.

Let $C=\{(i,j): i=1,\dots,k, |\al_j|=N, \weight (h_\gamma)\geq q_{j_0-1} \text{ for some }\gamma\prec\gamma_{i,j}\}$.
Then for each $(i,j)\in C$ we have $|h(x^i_{\al_j})|\leq \log_2(q_{j_0-1}+1)^{-1}$, thus
\begin{align}
 |h(\sum_{(i,j)\in C}d_j x^i_{\al_j})|\leq kd_{j_0} \frac{\# \{j: |\al_j|=N\}}{\log_2(q_{j_0-1}+1)}\leq kd_{j_0}
\end{align}
with the last inequality by (F5) (as $j_0-1=s_N$ in the notation of (F5)).

Let $E=\{(i,j): i=1,\dots,k, |\al_j|=N\}\setminus (A\cup B\cup C)$. Then for any $(i,j)\in E$ we have $q_{j-1}<\weight(h_{\gamma_{i,j}})<q_j$. Notice that for $(i,j)\in E$ and $(i',j')$ with $h_{\gamma_{i,j}}(x^{i'}_{\al_{j'}})\neq 0$, we have $\gamma_{i,j}\preceq \gamma_{i',j'}$ (recall the tree-analysis of $h$ is compatible with $(x^i_{\al_j})_{i=1,\dots,k,|\al_j|=N}$), thus either $h_{\gamma_{i,j}}=h_{\gamma_{i',j'}}$ (and thus $j=j'$) or $(i',j')\in C$.

We split $E$ into two pieces: $E'=\{(i,j)\in E: h_{\gamma_{i,j}}(x^{i'}_{\al_{j'}})=0\ \  \forall (i',j')\in E, (i',j')\neq(i,j)\}$ and $E''=E\setminus E'$. By the remark above we  have $\# E''\leq k$ and thus
\begin{align}
 |h(\sum_{(i,j)\in E''}d_{\al_j}x^i_{\al_j})|\leq \# E'' d_{\al_{j_0}}\leq kd_{j_0}
\end{align}

By the definition of $E'$ we can replace in the tree-analysis of $h$ each $h_{\gamma_{i,j}}$, $(i,j)\in E'$, by the functional $f^i_{\al_j}$, not decreasing the action of the resulting functional on $\sum_{(i,j)\in E'}d_{\al_j}x^i_{\al_j}$.
Thus
\begin{align}
 |h(\sum_{(i,j)\in E'}d_{\al_j}x^i_{\al_j})|\leq \|y_1+\dots+y_k\|
\end{align}
which ends the proof of \eqref{est1}.

For \eqref{est2} take any $h\in K$ and let $F=\{i\in\N: |h(e_i)|>\log_2(q_{j_0-1}+1)^{-1}\}$. Then by Remark \ref{schlum}(3) and the choice of $k\geq q_{j_0-1}$ we have
\begin{align}
|h|_F(y_1+\dots+y_k)|\leq \# F \cdot d_{j_0}   \leq q_{j_0-1}d_{j_0}\leq  kd_{j_0}
\end{align}
On the other hand estimate, using (F5)
\begin{align*}
|h|_{\N\setminus F}(y_1+\dots+y_k)|&\leq \#\supp (y_1+\dots+y_k)\|y_1+\dots+y_k\|_\infty\|h|_{\N\setminus F}\|_\infty\\
&\leq k(\#\supp x_N) d_{j_0} \frac{1}{\log_2(q_{j_0-1}+1) }\\
&=k\#\{\al\in\mt: |\al|=N\} d_{j_0}\frac{1}{\log_2(q_{j_0-1}+1) }\\
&\leq kd_{j_0}
\end{align*}
which ends the proof of \eqref{est2}.
\end{proof}

For any $j_0\in\N$ let
 $$
 A_{j_0}=\{\al\in\mt: |\al|=|\al_{j_0}|, \al\geq_{lex}\al_{j_0}\}\cup \bigcup_{|\al|=|\al_{j_0}|, \al<_{lex}\al_{j_0}}\suc\al, \ \ \ A_{j_0}^\prime=A_{j_0}\setminus \{\al_{j_0}\}
 $$
\begin{remark}\label{rem-est-functionals}
 $\# A_{j_0}\leq \sum_{j <j_0}m_j$  and $m_\al\geq m_{j_0+1}$ for each $\al\in A_{\al_{j_0}}^\prime$.
\end{remark}

\begin{proposition}\label{est-functionals}
 Let $(f_n)_n\subset X_S^*$ be a block sequence of functionals given by a core tree $\mt$ with parameters $(m_j)_j$. Fix $\al_{j_0}\in\mt$. Then for any $k\leq m_{j_0}$ and ${j_0}+1<n_1<\dots<n_k$ we have
  $$
 \|f_{n_1}+\dots+f_{n_k}\|\leq 2\sum_{j <j_0}m_j
 $$
 and for any $k\leq m_{j_0+1}$ and $|\al_{j_0}|+1<n_1<\dots<n_k$ we have
 $$
 \|\sum_{i=1}^k f_{n_i}-c_{j_0}\sum_{i=1}^k f_{\al_{j_0}}^i\|\leq 2\sum_{j <j_0}m_j
 $$
 \end{proposition}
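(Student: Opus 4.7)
The plan rests on the identity
$$
f_{n_i} = \sum_{\al \in A_{j_0}} c_\al f^{n_i}_\al, \qquad i=1,\dots,k,
$$
which I would establish by iterating the recursion $c_\al f^{n_i}_\al = \sum_{\beta \in \suc\al} c_\beta f^{n_i}_\beta$ (valid whenever $\al$ is non-terminal in the tree analysis of $f_{n_i}$) and noting that $A_{j_0}$ is a pairwise incomparable set of nodes meeting every root-to-leaf path in $\mt_{n_i}$. Subtracting the $\al = \al_{j_0}$ contribution gives $f_{n_i} - c_{j_0} f^{n_i}_{\al_{j_0}} = \sum_{\al \in A_{j_0}^\prime} c_\al f^{n_i}_\al$. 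Summing over $i$ and writing $G_\al := \sum_{i=1}^k f^{n_i}_\al$, both inequalities reduce to bounding $\|\sum_{\al\in\mathcal{A}} c_\al G_\al\|$, with $\mathcal{A} = A_{j_0}$ for the first and $\mathcal{A} = A_{j_0}^\prime$ for the second.

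The key step is the uniform bound $\|G_\al\| \leq 2$ for each $\al \in \mathcal{A}$. First I would check $k \leq m_\al$: in the first statement, $k \leq m_{j_0} = m_{\al_{j_0}}$ and $m_\al \geq m_{j_0+1} > m_{j_0}$ for $\al \in A_{j_0}^\prime$ by Remark \ref{rem-est-functionals}; in the second, the same remark gives $m_\al \geq m_{j_0+1} \geq k$ for every $\al \in A_{j_0}^\prime$. Since the assumptions on $(n_i)$ force $|\al| \leq |\al_{j_0}|+1 < n_i$, the node $\al$ is non-terminal in each tree analysis, so I may expand
$$
G_\al = \frac{1}{\log_2(m_\al+1)} \sum_{i=1}^k \sum_{\beta \in \suc\al} f^{n_i}_\beta.
$$
The $km_\al$ functionals $\{f^{n_i}_\beta\}_{i,\beta}$ form a block sequence in $K$: for each fixed $i$ the siblings $\{f^{n_i}_\beta\}_{\beta \in \suc\al}$ are block by Def.~\ref{RIS-tree-an}(4) applied to the associated vectors, and across different $i$ the blocks inherit the ordering of $(f_n)_n$. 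Hence $\tfrac{1}{\log_2(km_\al+1)}\sum_{i,\beta} f^{n_i}_\beta \in K$, which gives
$$
\|G_\al\| \leq \frac{\log_2(km_\al + 1)}{\log_2(m_\al+1)} \leq 2,
$$
the last inequality from the elementary bound $km_\al + 1 \leq (m_\al+1)^2$ when $k \leq m_\al$.

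Finally, using $c_\al \leq 1$ together with the triangle inequality,
$$
\Big\|\sum_{\al \in \mathcal{A}} c_\al G_\al\Big\| \leq 2 \sum_{\al \in \mathcal{A}} c_\al \leq 2 \# A_{j_0} \leq 2 \sum_{j < j_0} m_j,
$$
by Remark \ref{rem-est-functionals}, which closes both inequalities simultaneously. The main technical obstacle is the ``re-averaging'' of $G_\al$: the representation with $km_\al$ summands rather than $k$ is what converts the weak bound $\|G_\al\| \leq \log_2(k+1)$ into the useful constant $2$, and its legitimacy depends entirely on carefully verifying the block structure of $\{f^{n_i}_\beta\}_{i,\beta}$ across both indices.
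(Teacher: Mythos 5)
Your proposal is correct and follows essentially the same route as the paper: decompose each $f_{n_i}$ along the antichain $A_{j_0}$ (resp.\ $A'_{j_0}$) of its tree-analysis, re-average the $km_\al$ block functionals $\{f^{n_i}_\beta\}_{i,\beta}$ with weight $\log_2(km_\al+1)^{-1}$ to land back in $K$ and get the factor $2$ from $k\le m_\al$, then count $\#A_{j_0}\le\sum_{j<j_0}m_j$. Your explicit verification that $k\le m_\al$ for every node involved (including $\al_{j_0}$ itself in the first inequality) and of the block ordering across both indices is exactly the content the paper leaves implicit when it calls the first estimate a ``simplified version'' of the second.
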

\begin{proof}
Let any $f_{n_i}$ have the tree-analysis $(f_\al^i)_{\al\in\mt_{n_i}}$. Then $f_{n_i}=\sum_{\al\in A^\prime_{\al_{j_0}}}c_\al f^i_\al+c_{j_0}f^i_{\al_{j_0}}$ for each $i=1,\dots,k$, where each $\al\in A^\prime_{\al_{j_0}}$ is not terminal in $\mt_{n_i}$ as $n_i>j_0+1\geq |\al_{j_0}|+1$ for any $i=1,\dots,k$. We show the second estimate, as the proof of the first estimate is a simplified version of the proof of the second one. Estimate, using second part of Remark \ref{rem-est-functionals}, as follows
\begin{align*}
\|\sum_{i=1}^k f_{n_i}-c_{j_0}\sum_{i=1}^k f_{\al_{j_0}}^i\|&\leq \|\sum_{i=1}^k\sum_{\al\in A^\prime_{\al_{j_0}}}\frac{1}{\log_2(m_\al+1)}\sum_{\beta\in\suc\al} f^i_\beta \|\\
&\leq \sum_{\al\in A^\prime_{\al_{j_0}}}\|\frac{1}{\log_2(m_\al+1)}\sum_{i=1}^k\sum_{\beta\in\suc\al}f^i_\beta\|\\
&\leq \sum_{\al\in A^\prime_{\al_{j_0}}}\|\frac{2}{\log_2(km_\al+1)}\sum_{i=1}^k\sum_{\beta\in\suc\al}f^i_\beta\|\\
&\leq 2\# A_{\al_{j_0}}
\end{align*}
as each functional $\frac{1}{\log_2(km_\al+1)}\sum_{i=1}^k\sum_{\beta\in\suc\al}f^i_\beta$ is in the norming set $K$ of $X_S$.
\end{proof}

\subsection{Operator defined by a core tree}

We fix a core tree $\mt$ with parameters $(m_j)_j$. The aim of this section is to study properties of an operator $T\in\mathscr{B}_{seq}(X_S):=\mathscr{B}_{seq}(X_S,X_S)$ (where $X_S$ is considered with the canonical basis $(e_n)_n$), defined by a sequence $(f_n,x_n)_n\subset X_S^*\times X_S$ associated to $\mt$. 

\begin{remark}
 In \cite{mp2} the index set of summation was required to be more lacunary, however, in the case of the Schlumprecht space we profit from the  form of involved coefficients $(\frac{1}{\log_2(n+1)})_n$ and dealing with the families $\mathcal{A}_n$.
\end{remark}

 For an increasing sequence $(m_j)_j\subset\N$ we define the following properties:
\begin{enumerate}
\item[(F6)] $2^j\sum_{t<j}m_t\leq  \log_2(m_j+1)$, $j\in\N$,
 \item[(F7)] $c_j\leq 1/2^{j+1}$, $j\in\N$.
 \end{enumerate}

We recall now estimate used already in  \cite{mp2}.
Any sequence $(f_n)_{n\in \N}$ defines a family of norms $\|\cdot\|_j$, $j\in\N$, on $X_S$ in the following way:
$$
\|\cdot\|_j=\sup\{|f_{n_1}(\cdot)|+\dots+|f_{n_j}(\cdot)|: n_1<\dots<n_j\}
$$

\begin{proposition}\label{est-functionals-2}
Take  a block sequence of functionals $(f_n)\subset X_S^*$  associated to a core tree $\mt$ with $(m_j)_j$ satisfying (F6)-(F7). Then the operator $T: X_S\ni x\mapsto \sum_n f_n(x)e_n\in X_S$ is well-defined and satisfies the following 
$$
\|Ty\|\leq \sum_{j=1}^{j_0}\|y\|_{m_j}+\frac{7}{2^{j_0}}\|y\|,  \ \  y\in X_S, \ \ \ j_0\in\N
$$
\end{proposition}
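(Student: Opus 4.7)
Fix $y \in X_S$ and $j_0 \in \N$. The plan is to test $\|Ty\|$ against an arbitrary $h \in K$, controlling $|h(Ty)| = |\sum_n b_n f_n(y)|$ where $b_n := h(e_n)$. Since $h \in K \subset B_{X_S^*}$, $|b_n| \leq 1$, and Remark~\ref{schlum}(3) gives the crucial decay $\#\{n : |b_n| > 1/\log_2(k+1)\} \leq k$ for each $k \in \N$. Well-definedness of $T$ (and in fact $\|T\| \leq 1$) follows from observing that $h \circ T = \sum_n b_n f_n$ lies again in $K$: the tree-analysis of $h$, with each leaf $\pm e_n^*$ replaced by $\pm f_n$'s entire tree-analysis, is a valid $K$-tree, since block-order at every averaging node is preserved by $(f_n)$ being a block sequence.

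Partition the indices by $|b_n|$: for $1 \leq j \leq j_0$ set
\[
G_j := \{n : 1/\log_2(m_j+1) < |b_n| \leq 1/\log_2(m_{j-1}+1)\}
\]
(with convention $1/\log_2(m_0+1) := +\infty$), and let $R := \{n : |b_n| \leq 1/\log_2(m_{j_0}+1)\}$. By the decay, $|G_j| \leq m_j$, hence the easy bound
\[
\Bigl|\sum_{n \in G_j} b_n f_n(y)\Bigr| \leq \sum_{n \in G_j} |f_n(y)| \leq \|y\|_{m_j}
\]
accounts for the $\sum_{j=1}^{j_0} \|y\|_{m_j}$ contribution to the target inequality.

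The main task --- and the principal obstacle --- is the remainder estimate $|\sum_{n \in R} b_n f_n(y)| \leq (7/2^{j_0})\|y\|$. For $n \in R$, $|b_n| \leq \delta := 1/\log_2(m_{j_0}+1)$; by (F6), $\delta \leq 1/2^{j_0}$ and $\delta \sum_{j<j_0} m_j \leq 1/2^{j_0}$, while (F7) gives $c_{j_0} \leq 1/2^{j_0+1}$. The plan is to decompose $f_n = c_{j_0} f^n_{\al_{j_0}} + g_n$ and apply Proposition~\ref{est-functionals}, which bounds $\|\sum_i \pm g_{n_i}\| \leq 2\sum_{j<j_0} m_j$ on blocks of size at most $m_{j_0+1}$. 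Combined with $|b_n| \leq \delta$ and with a bound for $c_{j_0} \sum_R b_n f^n_{\al_{j_0}}$ obtained from the Schlumprecht averaging $\frac{1}{\log_2(k+1)}\sum_{i=1}^k \pm f^i_{\al_{j_0}} \in K$ applied to the block sequence $(f^n_{\al_{j_0}})_{n \in R}$, the two contributions should combine to yield the $7/2^{j_0}$ constant.

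The chief difficulty is that $|R|$ is a priori uncontrolled, so the Schlumprecht averaging must be used carefully to recombine the many small-coefficient $f_n$'s into a single controlled $K$-functional; a naive per-element summation would diverge. The interplay between the fast growth of $(m_j)$ from (F6), (F7), the block decomposition via Proposition~\ref{est-functionals} (applied at an appropriate level depending on $|R|$), and the $K$-averaging of the $(f^n_{\al_{j_0}})_n$ is precisely what delivers the final $7/2^{j_0}$ factor.
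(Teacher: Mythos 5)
Your treatment of the range $|b_n|>1/\log_2(m_{j_0}+1)$ is correct and is essentially the paper's: the paper uses the bands $B_j=\{n:\epsilon_{j+1}<|h(e_n)|\leq\epsilon_j\}$ with $\epsilon_j=\log_2(m_j+1)^{-1}$, gets $\#B_j\leq m_{j+1}$ from Remark \ref{schlum}(3), and bounds each band by $\|y\|_{m_{j+1}}$. Well-definedness is also not an issue (the paper invokes Fact 1.3 of \cite{mp2}, which gives $\|\sum_nf_n(y)e_n\|\leq\|y\|$ for a block sequence $(f_n)\subset K$; your substitution-into-the-tree-analysis argument is a proof of the same fact).

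The gap is exactly where you flag it, and your proposed resolution does not close it. Lumping all remaining indices into a single set $R$ and decomposing $f_n=c_{j_0}f^n_{\al_{j_0}}+g_n$ at the \emph{fixed} level $j_0$ cannot work: Proposition \ref{est-functionals} controls $\|\sum_i\pm g_{n_i}\|$ only for at most $m_{j_0+1}$ terms, while $\#R$ is unbounded, and chopping $R$ into blocks of length $m_{j_0+1}$ produces unboundedly many blocks each contributing about $\epsilon_{j_0}\cdot 2\sum_{t<j_0}m_t$, so the sum diverges. Choosing instead a single higher level $j_1$ "depending on $|R|$" fails for the opposite reason: the only available coefficient bound on $R$ is $\epsilon_{j_0}$, so the resulting estimate $\epsilon_{j_0}\cdot 2\sum_{t<j_1}m_t$ blows up as $j_1$ grows. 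The missing idea is to \emph{continue the band decomposition past $j_0$} and to match the decomposition level to the band: for $n\in B_j$ with $j\geq j_0$ write $f_n=(f_n-c_jf^n_{\al_j})+c_jf^n_{\al_j}$. Then $\#B_j\leq m_{j+1}$ lets Proposition \ref{est-functionals} apply at level $j$, and the band-specific coefficient bound $|b_n|\leq\epsilon_j$ combined with (F6) gives a summable contribution $2\epsilon_j\sum_{t<j}m_t\leq 2/2^j$; the leftover core parts $\sum_{j\geq j_0}\sum_{n\in B_j}c_jf^n_{\al_j}(y)e_n$ are handled by Fact 1.3 together with (F7), since $\sum_{j\geq j_0}c_j\leq 1/2^{j_0}$. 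Two further technical points you omit: Proposition \ref{est-functionals} requires the indices $n$ to exceed $|\al_j|+1$, so the finitely many small indices $D_j=B_j\cap\{1,\dots,j+2\}$ must be split off and estimated trivially by $(j+2)\epsilon_j\|y\|$; and without the per-band matching there is no way to reach a bound of the form $C/2^{j_0}$ at all.
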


\begin{proof}
The calculation below proves both statements ($T$ well-defined and the estimate). The calculation for $j_0=1$ and $y\in X_S$ with finite support show that for any $y$ finite support we have $\|Ty\|\leq 8\|y\|$, proving that $T$ is well-defined on the whole space $X_S$.

Put $\epsilon_j=\log_2(m_j+1)^{-1}$, $j\in\N$, with $\epsilon_0=1$.
Let each $f_n$, has the tree-analysis $(f_\al^n)_{\al\in\mt_n}$.
Let $h\in K$. For any $j\in\N$ let
\begin{align*}
B_j&=\{ n\in \N:\ \epsilon_{j+1}<|h(e_n)|\leq \epsilon_j\}, \ \ \   D_j=B_j\cap\{1,\dots,j+2\},
\end{align*}
Notice that $\# B_j\leq m_{j+1}$ for any $j\in\N$ by Remark \ref{schlum}(3). Therefore by Prop. \ref{est-functionals} for any $j\in\N$ we have
\begin{align}\label{est-functionals-2-a}
\|\sum_{n\in B_j\setminus D_j}\pm (f_n-c_jf_{\al_j}^n)\|\leq 2\sum_{t<j}m_t
\end{align}
Now for a fixed $j_0\in\N$ and $y\in X_S$ we have
\begin{align*}
|h(Ty)|&=|h(\sum_{n\in \N}f_n(y)e_n)|\\
&\leq \sum_{j=0}^{j_0-1}|\sum_{n\in B_j}f_n(y)h(e_n)|+|h(\sum_{j=j_0}^\infty\sum_{n\in B_j}f_n(y)e_n)|\\
&\leq \sum_{j=0}^{j_0-1}\|y\|_{m_{j+1}}+\sum_{j=j_0}^\infty|\sum_{n\in B_j}(f_n-c_jf_{\al_j}^n)(y)h(e_n)|+\|\sum_{j=j_0}^\infty\sum_{n\in B_j} c_jf^n_{\al_j}(y)e_n\|
\end{align*}
Estimate the second term as follows, using \eqref{est-functionals-2-a} and (F6):
\begin{align*}
 \sum_{j=j_0}^\infty|\sum_{n\in B_j}(f_n-c_jf_{\al_j}^n)(y)h(e_n)|&\leq \sum_{j=j_0}^\infty|\sum_{n\in D_j}(f_n-c_jf_{\al_j}^n)(y)h(e_n)|\\
 &+\sum_{j=j_0}^\infty|\sum_{n\in B_j\setminus D_j}(f_n-c_jf_{\al_j}^n)(y)h(e_n)|\\
 &\leq \sum_{j=j_0}^\infty (j+2)\epsilon_j\|y\|+2\sum_{j=j_0}^\infty\epsilon_j\sum_{t<j}m_t \|y\|\\
 &\leq \frac{6}{2^{j_0}}\|y\|
\end{align*}
Estimate the third term as follows
\begin{align*}
 \|\sum_{j=j_0}^\infty\sum_{n\in B_j} c_jf^n_{\al_j}(y)e_n\|&\leq \sum_{j=j_0}^\infty c_j\|\sum_{n\in B_j} f^n_{\al_j}(y)e_n\|\leq \sum_{j=j_0}^\infty\frac{1}{2^{j+1}} \|y\|\leq \frac{1}{2^{j_0}}\|y\|
\end{align*}
using (F7) and the fact that for any block sequence $(h_k)\subset K$ and $z\in X_S$ we have $\|\sum_kh_k(z)e_k\|\leq \|z\|$ (cf. Fact 1.3 \cite{mp2}).
\end{proof}

The following theorem summarizes previous estimates setting the ground for the general criterion (Prop.  \ref{main-criterion}) in the Schlumprecht space. 

\begin{theorem}\label{operator} Take a core tree $\mt$ with parameters $(m_j,q_j)_j$, satisfying (F1)-(F7) and the operator $T$ defined by sequences $(f_n,x_n)_n$ associated to $\mt$. Then we have the following
\begin{enumerate}
    \item the operator $T$  is bounded,  strictly singular and  non-compact.
    \item for any $N\in\N$ and any $q_{j_N-1}\leq k\leq m_{j_N}$ (where $\al_{j_N}$ is minimal in $\{\al\in\mt: |\al|=N\}$ with respect to  $\leq_{lex}$) we have 
$$
c_k(T)\leq 2\sum_{j<j_N}m_j,  \ \ d_k(T)\leq 14k\frac{\log_2(m_{j_{N-1}}+1)}{m_{j_{N-1}}}
$$
\item $T\in\mathscr{S}_\infty(X_S,X_S)$, where $X_S$ is considered with the basis $(e_n)_n$.
\end{enumerate}
\end{theorem}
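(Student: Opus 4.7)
The three parts are essentially repackagings of the estimates already prepared in Propositions \ref{est-functionals}, \ref{est-vectors} and \ref{est-functionals-2}. My plan is to verify them in the order (3), (2), (1), since strict singularity in (1) will follow cheaply from (3) combined with the remark preceding Proposition \ref{main-criterion}.

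For (3), let $(y_m)_m \subset X_S$ be bounded with $\|Ty_m\|_\infty = \sup_n |f_n(y_m)| \to 0$. Proposition \ref{est-functionals-2} applied to $y_m$ with arbitrary $j_0 \in \N$ gives
\[
\|Ty_m\| \leq \sum_{j=1}^{j_0}\|y_m\|_{m_j}+\frac{7}{2^{j_0}}\|y_m\|.
\]
Directly from the definition of $\|\cdot\|_k$ one has $\|y_m\|_k \leq k\|Ty_m\|_\infty$, so $\|y_m\|_{m_j}\to 0$ as $m\to\infty$ for each fixed $j$. First choosing $j_0$ so that the tail is below $\varepsilon/2$ and then letting $m\to\infty$ to drop the finite sum below $\varepsilon/2$ proves $\|Ty_m\|\to 0$.

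For (2), the estimate on $c_k(T)$ is the first inequality of Proposition \ref{est-functionals}, applied to any $n_1<\dots<n_k$ with $n_1>j_N+1$; this is legitimate because $c_k$ is a $\limsup$ over indices going to infinity. The $1$-unconditionality of $(e_n)_n$ (Remark \ref{schlum}(1)), combined with the fact that the $f_{n_i}$ are block functionals, yields $\|\epsilon_1 f_{n_1}+\dots+\epsilon_k f_{n_k}\|=\|f_{n_1}+\dots+f_{n_k}\|$ for all signs. The estimate on $d_k(T)$ is Proposition \ref{est-vectors} with $j_0:=j_N$, giving $14 k d_{j_N}$ in the stated range of $k$. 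Since $\alpha_{j_{N-1}}$ is the lex-minimal node at level $N-1$ it is the immediate predecessor of $\alpha_{j_N}$, hence $d_{j_N}=d_{j_{N-1}}\log_2(m_{j_{N-1}}+1)/m_{j_{N-1}}$, and the factor $d_{j_{N-1}}\leq 1$ (every $\log_2(m+1)/m \leq 1$ for $m\geq 1$) can be discarded, producing the claimed bound.

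Finally for (1), boundedness is immediate from Proposition \ref{est-functionals-2} with $j_0=1$ together with the trivial bound $\|y\|_{m_1}\leq m_1\|y\|$. Non-compactness follows from $Tx_n=e_n$, (F1) (so $(x_n)$ is bounded) and the fact that the normalized basic sequence $(e_n)_n$ admits no norm-convergent subsequence. Strict singularity is then inherited from (3) via the remark preceding Proposition \ref{main-criterion}, because $X_S$ contains no isomorphic copy of $c_0$ (as $\|e_1+\dots+e_k\|$ is unbounded in $X_S$). No genuinely new idea is needed; the only care lies in bookkeeping the lex-minimality of the nodes $\alpha_{j_N}$ so that the index ranges of Proposition \ref{est-vectors} match the statement of (2), and in the identification $\|Ty\|_\infty=\sup_n|f_n(y)|$ used in (3).
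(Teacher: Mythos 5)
Your proposal is correct and follows essentially the same route as the paper: boundedness and the $\mathscr{S}_\infty$ property from Proposition \ref{est-functionals-2} (your direct $\varepsilon$-argument replaces the paper's diagonalization, but is equivalent), part (2) from Propositions \ref{est-vectors} and \ref{est-functionals} together with the observation that the lex-minimal node at level $N-1$ is the immediate predecessor of the lex-minimal node at level $N$, and strict singularity via the inclusion $\mathscr{S}_\infty(X,Y)\subset\mathscr{S}(X)$ for $X$ not containing $c_0$. The only imprecision is your parenthetical reason that $X_S$ contains no copy of $c_0$: unboundedness of $\|e_1+\dots+e_k\|$ only rules out the canonical basis being $c_0$-equivalent, whereas the standard justification is the reflexivity of $X_S$ — a fact the paper likewise invokes without proof.
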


\begin{proof}
(1) follows immediately from Proposition \ref{est-functionals-2}, as $X_S$ does not contain $c_0$. The non-compactness is guaranteed by the fact that $Tx_n=e_n$, $n\in\N$, with (F1) (recall that $(x_n)_n$ is seminormalized). 

(2)  follows by Propositions \ref{est-vectors} and \ref{est-functionals} and the definition of  $(d_j)_j$. 

(3)  Take a seminormalized sequence $(y_m)_m\subset X_S$ with $\|Ty_l\|_{X_S,\infty}\to 0$, $l\to\infty$. As $\|T(\cdot)\|_{X_S,\infty}\geq \frac{1}{j}\|\cdot\|_{j}$ any $j\in\N$, it follows that for any $j\in\N$ $\|y_l\|_{m_j}\to 0$, $l\to\infty$.

Let $C=\sup_l\|y_l\|$. We show that any subsequence $(y_l)_{l\in M}$ contains a further subsequence $(y_l)_{l\in L}$ with $\|Ty_l\|\xrightarrow{L\ni l\to\infty} 0$, which ends the proof.
Take a subsequence $(y_l)_{l\in M}$. Diagonalizing pass to a subsequence $(y_l)_{l\in L}$, such that $\|y_l\|_{m_j}\leq \frac{1}{2^l}$ for any $j\leq l$, $l\in L$. Then by Prop. \ref{est-functionals-2} we have for any $l\in L$
$$
\|Ty_l\|\leq \sum_{j=1}^l\|y_l\|_{m_j}+\frac{7}{2^l}\|y_l\|\leq \frac{l}{2^l}+\frac{7C}{2^l} \to 0, \ \  L\ni l\to\infty
$$
\end{proof}

\subsection{Small operator ideals}

Fix two core trees: $\mt$ with parameters $(m_j,q_j)_j$  and $\mrr$ with parameters $(k_i,p_i)_i$, with both sets of parameters satisfying (F1)-(F7). Pick block sequences of vectors $(x_n)$ and functionals $(f_n)$ associated to $\mt$ and block sequences of vectors $(y_n)$ and functionals $(g_n)$ associated to $\mrr$ and associated  operators $T,R\in\mathscr{B}_{seq}(X_S)$:
\begin{align}\label{two-operator-def}
T: X_S\ni x\mapsto \sum_n f_n(x)e_n\in X_S, \ \ R: X_S\ni x\mapsto \sum_n g_n(x)e_n\in X_S. \ \
\end{align}

We define now conditions concerning only two fixed consecutive levels of the trees $\mt$ and $\mrr$.

For any $N\in\N$ let $j_N$ (resp. $i_N$) be such that $\al_{j_N}$ is minimal in $\{\al\in\mt: |\al|=N\}$ (resp. $\gamma_{i_N}$ is minimal in $\{\gamma\in\mrr: |\gamma|=N\}$) with respect to the lexicographical order $\leq_{lex}$.
We define the following property: we write that $(m_\al,q_\al)_{|\al|=N-1,N}\gtrdot (k_\gamma,p_\gamma)_{|\gamma|=N-1,N}$, for $0<N\in\N$, provided
\begin{enumerate}
\item[(L1)]
\begin{align*}\label{fast-increase-2}
\frac{\log_2 (m_{j_{N-1}}+1)}{m_{j_{N-1}}}\sum_{|\gamma|< N}k_\gamma\leq \frac{1}{N}
\end{align*}
\item[(L2)] $q_{j_N-1}\leq k_{i_N}\leq m_{j_N}$.
\end{enumerate}

\begin{remark}\label{rem-gtrdot}
Notice that it is possible to build core trees with parameters $(m_j,q_j)_{j=N-1,N}\gtrdot (k_i,p_i)_{i=N-1,N}$ for any $N\in\N$, with both sets of parameters satisfying (F1)-(F7) by induction on $N$. Indeed, choose $m_0,k_0,q_0,p_0$, to ensure (F1), (F2), (L1), (L2). For a fixed $N\in\N$, having chosen parameters of the trees $\mt$, $\mrr$ up to level $N-1$, i.e. $(m_\al)_{|\al|<N}$, $(q_\al)_{|\al|<N}$ (in particular $q_{j_N-1}$, since $|\al_{j_N-1}|=N-1$) and $(k_\gamma)_{|\gamma|<N}$, $(p_\gamma)_{|\gamma|<N}$, we pick $k_{i_N}\geq q_{j_N-1}$ and the rest of parameters on the $N$-th level of the tree $\mrr$, i.e. $(p_\gamma)_{|\gamma|=N}$ and $(k_\gamma)_{|\gamma|=N, \gamma>_{lex}\gamma_{i_N}}$ so that  (F1)-(F7) in the case of $\mrr$ are satisfied. Then we pick $m_{j_N}\geq k_{i_N}$ big enough to ensure (F1)-(F7) in the case of $\mt$ and  so that
$$
\frac{\log_2 (m_{j_N}+1)}{m_{j_N}}\sum_{|\gamma|\leq N}k_\gamma\leq \frac{1}{N+1}
$$
Then we choose $(q_\al)_{|\al|=N}$ and $(m_\al)_{|\al|=N, \al>_{lex} \al_{j_N}}$ to ensure (F1)-(F7) in the case of $\mt$ and thus we finish the inductive procedure.
\end{remark}

\begin{lemma}\label{dyadic}
 There is a family $(T_r)_{r\in\R}$ of bounded strictly singular operators on $X_S$ such that for any $r,s\in\R$, $r\neq s$ we have
 $$
\inf_{k\in\N}\frac{1}{k}c_k(T_r)d_k(T_s)=0 
$$
\end{lemma}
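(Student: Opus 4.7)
The plan is to index the family by an almost disjoint family of infinite subsets of $\N$ chosen with extra lacunarity, and to define each $T_r$ via the core-tree machinery of Theorem \ref{operator}, with parameters chosen inductively so that for any $r \neq s$ the dominance relation $\gtrdot$ of Remark \ref{rem-gtrdot} holds at infinitely many pairs of consecutive levels.

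Concretely, I will fix a very sparse sequence $n_0 < n_1 < \cdots$ in $\N$ (say $n_k = 2^{2^k}$) and a continuum-sized almost disjoint family $\{A_r\}_{r \in \R}$ of infinite subsets of $\{n_k : k \in \N\}$, and set $A_r' := A_r \cup (A_r + 1)$. At each level $N$, I will prescribe two parameter pairs $(s_N, q_N)$ and $(L_N, Q_N)$ with $s_N < L_N$ and $q_N < Q_N$. The tree $\mathcal{T}_r$ uses $(L_N, Q_N)$ at every level-$N$ node when $N \in A_r'$ and $(s_N, q_N)$ otherwise, and $T_r$ is defined as the operator associated to $\mathcal{T}_r$ by Theorem \ref{operator}. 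The sequences $(s_N, q_N, L_N, Q_N)_N$ are built level by level following Remarks \ref{rem-fast-growth} and \ref{rem-gtrdot}, so that every ``mixed'' history yields a tree satisfying (F1)--(F7), and moreover $L_N/\log_2(L_N+1)$ dominates $(N+1)$ times the worst-case value of $\sum_{|\gamma| < N+1} m^r_\gamma$ over all $r$.

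For any $r \neq s$, both $A_s \setminus A_r$ and $A_r \setminus A_s$ will be infinite. Given $n_k \in A_s \setminus A_r$, the two consecutive levels $N - 1 := n_k$ and $N := n_k + 1$ both lie in $A_s'$, while by the lacunarity (which prevents $n_k + 1$ from equaling any $n_{k'}$) neither lies in $A_r'$. Hence $m^s_\alpha = L_{N-1}, L_N$ and $m^r_\alpha = s_{N-1}, s_N$ at the respective levels, so (L2) holds by design of the four sequences and (L1) follows from the dominance built into $L_{N-1}$; that is, $(m^s, q^s) \gtrdot (m^r, q^r)$ at levels $(N-1, N)$. Taking $k := s_N$, the minimal level-$N$ parameter of $\mathcal{T}_r$, which by (L2) lies in both ranges needed for Theorem \ref{operator}(2) applied to $T_r$ and $T_s$, the bounds of that theorem combined with (L1) give
\[
\frac{c_k(T_r)\, d_k(T_s)}{k} \;\leq\; 28 \sum_{|\gamma|<N} m^r_\gamma \cdot \frac{\log_2(m^s_{j^s_{N-1}}+1)}{m^s_{j^s_{N-1}}} \;\leq\; \frac{28}{N},
\]
and letting $n_k \to \infty$ along $A_s \setminus A_r$ yields $\inf_{k\in\N} c_k(T_r) d_k(T_s)/k = 0$.

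The main obstacle is carrying out the inductive construction so that (F1)--(F7) hold for every individual $\mathcal{T}_r$ and (L1), (L2) hold for each pair $(r,s)$ at infinitely many levels, uniformly across continuum-many trees. This is feasible because the lacunarity of the $A_r$'s bounds the number of boosted levels of any $\mathcal{T}_r$ below a given $N$ by roughly $\log\log N$; consequently the worst-case sum $\sum_{|\gamma| < N} m^r_\gamma$ depends only on finitely many $L_l$ with $l$ much smaller than $N-1$, and taking $L_{N-1}$ to be a sufficient tower above this quantity forces (L1), while (F1)--(F7) are monotone in parameter size and so remain satisfied throughout.
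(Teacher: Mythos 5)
Your overall strategy is the same as the paper's: build continuum many core trees arranged so that for every ordered pair of distinct trees the relation $(m^s,q^s)\gtrdot(m^r,q^r)$ of (L1)--(L2) holds at infinitely many pairs of consecutive levels, and then read off $c_k(T_r)d_k(T_s)/k\le 28/N$ from Theorem \ref{operator}(2). The paper realizes the required two-sidedness by alternating the direction of $\gtrdot$ along the lexicographic order of a dyadic tree at levels $4N$ and $4N+2$; your almost-disjoint-family device, where each of $A_s\setminus A_r$ and $A_r\setminus A_s$ supplies infinitely many consecutive level pairs at which exactly one of the two trees is ``boosted'', achieves the same thing and is a legitimate variant. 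The displayed estimate and the use of (L1), (L2) and of Theorem \ref{operator}(2) are correct as far as they go.

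There is, however, a genuine gap in the construction of the trees themselves. You prescribe a single pair $(s_N,q_N)$ or $(L_N,Q_N)$ per level and assign it to \emph{every} node of that level. But conditions (F1)--(F7) are indexed by \emph{nodes} enumerated in lexicographic order, not by levels: (F4) requires $\log_2(m_j+1)\ge 2\log_2(m_{j-1}+1)$ for consecutive nodes $\al_{j-1},\al_j$, which for two nodes on the same level carrying the same parameter $L_N$ would read $\log_2(L_N+1)\ge 2\log_2(L_N+1)$; (F6) and (F7) likewise fail across a level, since the node index $j$ runs through the (very many) nodes of the level while $m_j$ and $c_j$ stay constant and $2^{-j-1}$ collapses; and Lemma \ref{lemma-RIS-tree} needs the full interleaved sequence $m_0<q_0<m_1<q_1<\cdots$ to be strictly and rapidly increasing. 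So no constant-per-level assignment satisfies the hypotheses of Theorem \ref{operator}, and your operators $T_r$ are not yet well defined (boundedness, strict singularity and the estimates of Theorem \ref{operator}(2)--(3) all rest on (F1)--(F7)). The repair is exactly the bookkeeping the paper's proof carries out: attach to each level of each tree a whole finite increasing tuple of parameters (the role of the tuples $\mm^\dd=(m_j^\dd,q_j^\dd)_{j\in A_\dd}$), interleave the tuples corresponding to the finitely many prefix-types occurring at each level, and restate (L1)--(L2) in terms of the first and last entries and the total sums of these tuples. Your lacunarity and worst-case-sum argument then goes through, but it is this tuple-per-level version, not the single-pair version, that must be executed.
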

\begin{proof} Let $\mathcal{D}$ be a dyadic tree with the root $\emptyset$, the order $\preceq$, the lexicographic order $\leq_{lex}$ and levels $|\dd|$, $\dd\in\mathcal{D}$. For any $\dd\in\mathcal{D}$, $\dd\neq\emptyset$ let $\dd-$ be the immediate predecessor of $\dd$ in $(\mathcal{D},\preceq)$, i.e. $\dd\in\suc(\dd-)$. Recall that a branch of a tree is a maximal linearly ordered subset of $\mathcal{D}$ with respect to $\preceq$.

We attach to every node $\bf d\in \mathcal{D}$ a set of parameters $\mm^\dd=(m_j^\dd,q_j^\dd)_{j\in A_\dd}$ for some finite $A_\dd\subset\N$ so that
 \begin{enumerate}
  \item for any branch $\mathcal{B}\subset\mathcal{D}$ we have $\N=\bigcup_{\dd\in \mathcal{B}}A_\dd$, and for any $\dd,\dd'\in \mathcal{B}$ with $\dd\preceq \dd'$ we have $\max A_\dd<\min A_{\dd'}$,
  \item for any branch $\mathcal{B}\subset\mathcal{D}$ parameters $(m_j^\dd)_{j\in A_\dd,\dd\in \mathcal{B}}$ define a core tree $\mt_\mathcal{B}$, such that for any $\dd\in \mathcal{B}$ we have $\{j\in\N: |\al_j|=|\dd|\}=A_\dd$ (i.e. $\mm^\dd$ is the set of parameters on the $|\dd|$-level of $\mt_\mathcal{B}$),
  \item for any branch $\mathcal{B}\subset\mathcal{D}$ parameters $\cup_{\dd\in \mathcal{B}}\mm^\dd=(m_j^\dd,q^\dd_j)_{j\in A_\dd,\dd\in \mathcal{B}}$ satisfy (F1)-(F7),
  \item for any $0<N\in\N$, any $\dd,\dd'\in \mathcal{D}$ with $|\dd|=|\dd'|=4N$, $\dd\leq_{lex}\dd'$ we have $\mm^{\dd-}\cup \mm^{\dd}\gtrdot \mm^{\dd'-}\cup \mm^{\dd'}$,
  \item for any $N\in\N$, any $\dd,\dd'\in \mathcal{D}$ with $|\dd|=|\dd'|=4N+2$, $\dd\leq_{lex}\dd'$ we have $\mm^{\dd-}\cup \mm^{\dd}\lessdot \mm^{\dd'-}\cup \mm^{\dd'}$.
 \end{enumerate}
Conditions (1)-(3) above describe the way the parameters defining trees $\mt_\mathcal{B}$, with $\mathcal{B}$ - a branch of $\mathcal{D}$, are represented in the tree $\mathcal{D}$. For any branch $\mathcal{B}$ of $\mathcal{D}$ let $T_\mathcal{B}\in\mathscr{B}_{seq}(X_S)$ be the operator defined by sequences associated to the tree $\mt_\mathcal{B}$ with parameters $\cup_{\dd\in \mathcal{B}}\mm^\dd=(m_j^\dd,q^\dd_j)_{j\in A_\dd,\dd\in \mathcal{B}}$. Conditions (4) and (5) above imply, by Theorem \ref{operator} (2), that for any different branches $\mathcal{B},\mathcal{B}'$ of $\mathcal{D}$ we have
$$
\inf_{k\in\N}\frac{1}{k}c_k(T_\mathcal{B})d_k(T_{\mathcal{B}'})=0 
$$

In order to finish the proof notice that we can choose tuples $\mm^\dd=(m_j^\dd,q_j^\dd)_{j\in A_\dd}$, $\dd\in\mathcal{D}$ satisfying (1)-(5) by induction on the level of $\mathcal{D}$, more precisely choosing for every $0<N\in\N$ parameters $(m_j^\dd,q_j^\dd)_{j\in A_\dd, |\dd|=2N-1,2N}$ satisfying (1)-(3) and either (4) or (5) as in  Remark \ref{rem-gtrdot}, profiting from the fact that on each level of $\mathcal{D}$ there are finitely many $\dd$'s.
\end{proof}

\begin{theorem}\label{main}
 There is a family of small closed operator ideals $(I_A)_{A\subset\R}$ on the Schlumprecht space such that $I_A\subset I_B\Leftrightarrow A\subset B$, for any $A,B\subset \R$. In particular there are exactly $2^\mathfrak{c}$ distinct small closed  ideals on the Schlumprecht space. There is also a chain of cardinality $\mathfrak{c}$ of small closed ideals and an antichain of cardinality $2^\mathfrak{c}$ of small closed ideals on the Schlumprecht space.
 \end{theorem}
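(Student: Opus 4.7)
The plan is to set $I_A := I(\{T_r : r \in A\})$ for each $A \subset \R$, where $(T_r)_{r \in \R}$ is the family from Lemma \ref{dyadic} (with $I_\emptyset$ interpreted as the smallest closed operator ideal, i.e.\ the closure of the finite-rank operators). Since each $T_r$ is strictly singular and $\mathscr{S}(X_S)$ is closed, every $I_A$ is automatically small; the direction $A \subset B \Rightarrow I_A \subset I_B$ is immediate from the definition of $I(\cdot)$.

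The substantive direction is the contrapositive: given $r \in A \setminus B$, we aim to show $T_r \notin I_B$, which forces $I_A \not\subset I_B$ since $T_r \in I_A$. We apply Proposition \ref{main-criterion} with $T := T_r$ and $\mathscr{B} := \{T_s : s \in B\}$. Each $T_s$ is built from a core tree, so by Theorem \ref{operator}(3) we have $\mathscr{B} \subset \mathscr{B}_{seq}(X_S, X_S) \cap \mathscr{S}_\infty(X_S, X_S)$, and likewise $T_r \in \mathscr{B}_{seq}(X_S, X_S)$. The hypothesis \eqref{eq-main-criterion} asks that $\inf_k c_k(T_s) d_k(T_r)/k = 0$ for every $s \in B$; since $r \neq s$ for every such $s$, this is precisely what Lemma \ref{dyadic} provides (its conclusion is stated symmetrically in its two indices). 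The proposition then yields $\|T_r - Q\| \geq 1/d > 0$ for every $Q \in I_B$, where $d = \sup_n \|x_n\|$ is taken from the seminormalized sequence $(x_n)_n$ associated to $T_r$; hence $T_r \notin I_B$.

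The cardinality statements are then formal consequences. The map $A \mapsto I_A$ is order-preserving and order-reflecting, hence injective, so $|\mathcal{P}(\R)| = 2^\mathfrak{c}$ produces $2^\mathfrak{c}$ distinct small closed ideals on $X_S$. The chain $A_t = (-\infty, t)$, $t \in \R$, in $(\mathcal{P}(\R), \subset)$ transports to a chain of length $\mathfrak{c}$ in the lattice of small closed ideals. An antichain of cardinality $2^\mathfrak{c}$ in $\mathcal{P}(\R)$ is obtained, after identifying $\R$ with $\R \times \{0,1\}$, by taking the graphs $\{(x, f(x)) : x \in \R\}$ of all $f \in \{0,1\}^\R$: any two distinct such graphs are pairwise incomparable. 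This antichain transports through $A \mapsto I_A$ to an antichain of small closed ideals of cardinality $2^\mathfrak{c}$.

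The only genuinely new step beyond Lemma \ref{dyadic} and Proposition \ref{main-criterion} is checking that their hypotheses line up on the pairs $(T_r, T_s)$; everything else is set-theoretic bookkeeping in $\mathcal{P}(\R)$. The real obstacle has been absorbed into Lemma \ref{dyadic}, whose dyadic-tree construction simultaneously encodes $2^\mathfrak{c}$ asymptotic profiles of $(c_k, d_k)$ and ensures that any two such profiles interact as required by the criterion.
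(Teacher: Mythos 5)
Your proposal is correct and follows essentially the same route as the paper: define $I_A$ as the closed ideal generated by $(T_r)_{r\in A}$, and for $r\notin B$ combine Theorem \ref{operator}(3) with Lemma \ref{dyadic} to verify the hypotheses of Proposition \ref{main-criterion} and conclude $T_r\notin I_B$. Your added details on the chain and antichain in $\mathcal{P}(\R)$ are fine and merely make explicit what the paper leaves implicit.
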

 
\begin{proof} Take a family $(T_r)_{r\in\R}\subset \mathscr{S}(X_S)$ as in Lemma \ref{dyadic} and for any $A\subset\R$ define $I_A$ to be the closed operator ideal generated by $(T_r)_{r\in A}$. Then obviously $A\subset B$ implies $I_A\subset I_B$. On the other hand, fix $A\subset\R$ and $r\in \R\setminus A$. Then by Theorem \ref{operator} (3) operators $T_r$ and $(T_s)_{s\in A}$  satisfy the assumptions of Prop. \ref{main-criterion}, thus in particular $T_r\not\in I_A$. In other words $T_r\in I_A$ implies $r\in A$, which finishes the proof.
\end{proof}

\begin{remark}

 Theorem \ref{main} holds true in the case of Banach spaces with a basis whose spreading model is equivalent to the basis $(e_n)_n$ of the Schlumprecht space, like Gowers-Maurey space (cf. \cite{mp2}). 

The last step of construction (creating $(I_A)_{A\subset\R}$ from a family of bounded operators $(T_r)_{r\in\R}$ with certain separation property described in Prop. \ref{main-criterion})
works as in \cite{js}, however, the constructed operators are of a different type.

\end{remark}

\section{Schreier spaces}
\subsection{Small operator ideals}

We recall the definition of  Schreier spaces $X[\mathcal{S}_N]$, $N\in\N$, given in \cite{aa}. 
Define the Schreier families $\mathcal{S}_N$, $N\in\N$, by induction. Let $$
\mathcal{S}_0=\{\{n\}: n\in\N\}\cup\{\emptyset\}
$$
and for any $N\in\N$ let
$$
\mathcal{S}_{N+1}=\left\{ \bigcup_{i=1}^kE_i: k\in\N,  E_1,\dots,E_k\in\mathcal{S}_N, k\leq\min E_1, E_1<\dots<E_k\right\}\cup\{\emptyset\}
$$
We recall here one of the basic properties of Schreier families we shall need later, namely the spreading property: for any $N\in\N$ and  $\{n_1,\dots,n_k\}\in\mathcal{S}_N$, and $m_1<\dots<m_k\in\N$ with $n_1\leq m_1, \dots, n_k\leq m_k$ also $\{m_1,\dots,m_k\}\in\mathcal{S}_N$.

The Schreier space $X[\mathcal{S}_N]$, $N\in\N$, is the completion of $c_{00}$ with the norm
$$
\|(x_n)_n\|_{\mathcal{S}_N}=\sup_{ A\in\mathcal{S}_N, A\neq\emptyset}\sum_{n\in A}|x_n|, \ \ \ (x_n)_n\in c_{00}
$$
The unit vector basis of $c_{00}$, which we denote in this section by $(\tilde{e}_n)_n$, is a shrinking unconditional basis for any $X[\mathcal{S}_N]$, $N\in\N$ (\cite{aa}). For any $I\subset\N$ by $P_I$ denote the canonical projection on $[\tilde{e}_n: n\in I]$.

In order to build a family of $2^\mathfrak{c}$ many distinct operator ideals on the Schreier spaces we shall need a family of $\mathfrak{c}$ many pairwise incomparable subsequences of the basis, i.e. no member of the family dominates the other (in fact we need even stronger condition described in Lemma \ref{sch-dyadic}). 
We shall need the following version of Lemma 3.4 \cite{gl}, providing a necessary condition for a subsequence of the basis to dominate another subsequence of the basis. 

For any infinite $I,J\subset\N$, $I=(i_n)_n$, $J=(j_n)_n$ let $\phi_{I,J}:I\in i_n\mapsto j_n\in J$. 

\begin{lemma}\cite[Lemma 3.4]{gl}\label{sch-rij}
Fix $N\in\N$, $N\geq 1$, and infinite $I,J\subset\N$. 

Assume that for any $k\in\N$ there are  $\mathcal{S}_N$ maximal sets $F_1<\dots<F_k\subset I$   so that $\phi_{I,J}(F_1\cup \dots\cup F_k)\in \mathcal{S}_1$. 

Then the sequence $(\tilde{e}_i)_{i\in I}$ does not dominate the sequence $(\tilde{e}_j)_{j\in J}$. 
\end{lemma}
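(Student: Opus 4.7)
The plan is to construct, for each $k\in\N$, a pair of test vectors on the $I$- and $J$-sides whose $\mathcal{S}_N$-norm ratio grows unboundedly with $k$, thereby ruling out domination. For each $k$, invoke the hypothesis to pick $\mathcal{S}_N$-maximal sets $F_1<\dots<F_k\subset I$ with $E':=\phi_{I,J}(F_1\cup\dots\cup F_k)\in\mathcal{S}_1$, and set
$$
x_I=\sum_{j=1}^{k}\frac{1}{|F_j|}\sum_{i\in F_j}\tilde{e}_i,\qquad x_J=\sum_{j=1}^{k}\frac{1}{|F_j|}\sum_{i\in F_j}\tilde{e}_{\phi_{I,J}(i)}.
$$
These share the same coefficient sequence under the bijection $\phi_{I,J}$, so non-domination follows from a lower bound $\|x_J\|_{\mathcal{S}_N}\geq k$ and a uniform upper bound $\|x_I\|_{\mathcal{S}_N}\leq C_N$ independent of $k$ and of the choice of the $F_j$'s, since the ratio is then at least $k/C_N\to\infty$.

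The lower bound on $\|x_J\|_{\mathcal{S}_N}$ is immediate: the coefficients are nonnegative and $\supp x_J=E'\in\mathcal{S}_1\subseteq\mathcal{S}_N$, so testing against $A=E'$ in the supremum definition of the norm yields
$$
\|x_J\|_{\mathcal{S}_N}\geq\sum_{j=1}^{k}\sum_{i\in F_j}\frac{1}{|F_j|}=k.
$$

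For the upper bound on $\|x_I\|_{\mathcal{S}_N}$, fix $A\in\mathcal{S}_N$; by heredity of $\mathcal{S}_N$ one may reduce to $A\subseteq\bigcup_j F_j$, in which case $\min A=\min F_{j_0}$ for the smallest $j_0$ with $A\cap F_{j_0}\neq\emptyset$, so $\min F_j\geq\min A$ for every $j$ contributing to $\sum_{i\in A}|x_I(i)|=\sum_j|A\cap F_j|/|F_j|$. Partition the contributing $j$'s into those with $F_j\subseteq A$ (fully contained) and those with $F_j$ only partially overlapping $A$. The key ingredient is the monotonicity of the maximal-size function of $\mathcal{S}_N$: since each $F_j$ is $\mathcal{S}_N$-maximal with $\min F_j\geq\min A$, one has $|F_j|\geq|A|$, which forces the fully-contained count to be at most one, while the partial contribution is bounded by $\sum_j|A\cap F_j|/\min_j|F_j|\leq|A|/|A|=1$. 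Combining gives $\|x_I\|_{\mathcal{S}_N}\leq C_N$ for a constant depending only on $N$, whence
$$
\frac{\|x_J\|_{\mathcal{S}_N}}{\|x_I\|_{\mathcal{S}_N}}\geq\frac{k}{C_N}\xrightarrow{k\to\infty}\infty.
$$

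The principal difficulty is making this last estimate fully rigorous. The hypothesis provides no quantitative spacing between consecutive $F_j$'s beyond block succession, so the classical block $\ell_1^N$-average machinery, which typically requires rapidly growing gaps, is unavailable. Instead one must extract the bound by a combinatorial induction on $N$, exploiting the spreading property of the Schreier families and the tower-type growth of the maximal $\mathcal{S}_N$-set size as a function of its minimum, to control precisely how many $\mathcal{S}_N$-maximal sets a single $\mathcal{S}_N$-admissible set can "fully cover" versus "partially overlap". With this combinatorial heart in place, the lemma follows from the averaging argument sketched above.
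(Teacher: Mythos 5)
First, a remark on context: the paper does not prove this lemma at all --- it is imported verbatim from \cite[Lemma 3.4]{gl} --- so your argument has to stand entirely on its own. The overall framework (matching test vectors on the $I$- and $J$-sides via $\phi_{I,J}$, lower bound $\|x_J\|_{\mathcal{S}_N}=\|x_J\|_{\ell_1}=k$ from $\supp x_J\in\mathcal{S}_1\subset\mathcal{S}_N$) is sound. The gap is the uniform upper bound $\|x_I\|_{\mathcal{S}_N}\le C_N$: this is not merely ``the principal difficulty to be made rigorous'' --- for $N\ge 2$ it is \emph{false}, and your justification contains the precise error. You claim that if $F_j$ is $\mathcal{S}_N$-maximal with $\min F_j\ge\min A$ and $A\in\mathcal{S}_N$, then $|F_j|\ge|A|$. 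This holds only for $N=1$, where both quantities are controlled by the minimum. For $N\ge 2$ an $\mathcal{S}_N$-set with a prescribed minimum can be arbitrarily large (e.g.\ $\{2\}\cup G\in\mathcal{S}_2$ for any maximal $\mathcal{S}_1$-set $G$, however far out $G$ sits), so the comparison collapses. Concretely, for $N=2$ let $F_1<\dots<F_k$ be consecutive maximal $\mathcal{S}_2$-intervals, $F_j=E^j_1\cup\dots\cup E^j_{n_j}$ with $n_j=\min F_j$ and $E^j_i$ consecutive maximal $\mathcal{S}_1$-intervals, so $|E^j_i|=2^{i-1}n_j$ and $|F_j|=(2^{n_j}-1)n_j$. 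The top block $G_j:=E^j_{n_j}$ satisfies $|G_j|/|F_j|>1/2$, and $A:=G_1\cup\dots\cup G_k$ is a union of $k$ successive maximal $\mathcal{S}_1$-sets with $k\le 2^{n_1-1}n_1=\min G_1$, hence $A\in\mathcal{S}_2$ and $\|x_I\|_{\mathcal{S}_2}\ge\sum_{j}|G_j|/|F_j|>k/2$. Since $\|x_J\|_{\mathcal{S}_2}=k$ exactly, your test vectors give a ratio at most $2$ and witness nothing; and such configurations are fully compatible with the hypothesis of the lemma (take $I=\N$ and $J$ sufficiently sparse, with the $F_j$'s starting far enough out that their image lies in $\mathcal{S}_1$).

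The failure is structural rather than technical: uniform averages $\frac{1}{|F|}\mathbf{1}_F$ over admissible sets do not behave like the $c_0$-basis in $X[\mathcal{S}_N]$ once $N\ge 2$ --- compare the paper's Lemma \ref{sch-av}, by which averages over $\mathcal{S}_1$-sets sit inside $X[\mathcal{S}_N]$ like the basis of $X[\mathcal{S}_{N-1}]$, not like $c_0$. No combinatorial induction can rescue the estimate as stated. A correct proof along these lines (this is what \cite{gl} does, via the repeated-averages hierarchy) must weight the elements of each $F_j$ \emph{non-uniformly}, following the canonical decomposition of a maximal $\mathcal{S}_N$-set into maximal $\mathcal{S}_{N-1}$-sets: give each $E^j_i$ total mass $1/n_j$ and distribute it recursively. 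For such vectors the $\mathcal{S}_N$-norm of the sum is uniformly bounded, while the image vector still carries $\ell_1$-mass $k$ on an $\mathcal{S}_1$-set, and your ratio argument then goes through. As written, only the case $N=1$ of your proof (the Casazza--Shura situation \cite{cs}) is correct.
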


We define now operators generating distinct operator ideals. Recall  that by \cite{o} any Schreier space $X[\mathcal{S}_N]$, $N\in\N$, is saturated by  $c_0$.  

We fix for the rest of the section a  normalized basic sequence $(e_n)_n$ equivalent to the unit vector basis of $c_0$ and let $Y\subset X[\mathcal{S}_N]$ be the closed subspace spanned by $(e_n)_n$.  Let $T:X[\mathcal{S}_N]\to Y$ be the bounded operator carrying each $\tilde{e}_n$ to $e_n$. By \cite[Lemma 4.14]{bkl} the operator $T$ is strictly singular. 

For any infinite $I\subset \N$ let $T_I=T\circ P_I$. Notice that $T_I\in\mathscr{B}_{seq}(X[\mathcal{S}_N],Y)$ with associated sequences 
$(\tilde{e}_{i}^*,\tilde{e}_{i})_{i\in I}$. 
By construction   $T_I\in\mathscr{S}_\infty(X[\mathcal{S}_N],Y)\cap\mathscr{S}(X[\mathcal{S}_N])$. However,  $\inf_k\frac{1}{k}c_k(T_J)d_k(T_I)=1$ for any $I,J\subset\N$ thus we cannot apply Prop. \ref{main-criterion}. Due to results of \cite{gl} we are able to prove the following, somewhat analogous, result. 

\begin{proposition}\label{sch-main-criterion}  Fix $N\in\N$, $N\geq 1$, infinite $I\subset \N$ and a family $\mathcal{J}$ of infinite subsets of $\N$. Assume that for any $J_1,\dots, J_k\in \mathcal{J}$, $k\in\N$,   the sequence $(\tilde{e}_i)_{i\in I}$ does not dominate  $(\tilde{e}_j)_{j\in J}$, where $J=J_1\cup\dots\cup J_k$.

Then $\|T_I-Q\|\geq 1$ for any $Q$ in the operator ideal generated by the family of operators $\{T_J: J\in \mathcal{J}\}$. 
\end{proposition}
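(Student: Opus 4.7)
The proof follows the scheme of Proposition \ref{main-criterion}, with the non-domination hypothesis playing the role of the quantitative separation \eqref{eq-main-criterion}. Assume for contradiction that $\|T_I - Q\| < 1$ for some $Q$ in the ideal generated by $\{T_J : J \in \mathcal{J}\}$. Choosing $\delta > 0$ with $\|T_I - Q\| \leq 1 - 3\delta$ and approximating $Q$ in operator norm, I may assume $Q = \sum_{j=1}^l Q_j \circ T_{J_j} \circ S_j$ for some $J_j \in \mathcal{J}$ with $\|T_I - Q\| \leq 1 - 2\delta$; set $J := J_1 \cup \dots \cup J_l$. Since $T_I \tilde{e}_i = e_i$ is normalized, $\|Q\tilde{e}_i\| \geq 2\delta$ for every $i \in I$, and pigeonholing over the $l$ summands of $Q$ yields a single $j_0 \in \{1, \dots, l\}$ and an infinite $I_1 \subset I$ such that $\|Q_{j_0} T_{J_{j_0}} S_{j_0} \tilde{e}_i\| \geq 2\delta/l$ for every $i \in I_1$. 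Writing $R := T_{J_{j_0}}$ and $S := S_{j_0}$, the sequence $(RS\tilde{e}_i)_{i \in I_1}$ is seminormalized, and since $R \in \mathscr{S}_\infty(X[\mathcal{S}_N], Y)$ there exist $c > 0$ and indices $n_i \in J_{j_0}$ with $|\tilde{e}_{n_i}^*(S\tilde{e}_i)| \geq c$ for every $i \in I_1$. Using that $(\tilde{e}_n)_n$ is shrinking (so that $(S\tilde{e}_i)$ is weakly null) together with the weak$^*$-nullity of $(\tilde{e}_n^*)_n$, I pass exactly as in the proof of Proposition \ref{main-criterion} to an infinite $I_2 \subset I_1$ on which $i \mapsto n_i$ is injective and $|\tilde{e}_{n_i}^*(S\tilde{e}_{i'})| \leq c\, 2^{-i-i'}$ for distinct $i, i' \in I_2$.

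The decisive step converts this local biorthogonal structure into a genuine domination estimate. For nonnegative, finitely supported $(a_i)_{i \in I_2}$ (which suffices by unconditionality) and any Schreier-admissible $F \in \mathcal{S}_N$, consider the functional $g_F := \sum_{i:\, n_i \in F} \epsilon_i \tilde{e}_{n_i}^*$ with signs chosen so that $\epsilon_i \tilde{e}_{n_i}^*(S\tilde{e}_i) \geq c$. By hereditariness of $\mathcal{S}_N$ the subset $\{n_i : n_i \in F\} \subset F$ is Schreier, whence $\|g_F\|_{X[\mathcal{S}_N]^*} \leq 1$. The diagonal contribution to the pairing $g_F(S\sum_i a_i \tilde{e}_i)$ is at least $c \sum_{i : n_i \in F} a_i$, while the off-diagonal part is dominated by a constant multiple of $\max_i a_i \leq \|\sum_i a_i \tilde{e}_i\|_{\mathcal{S}_N}$. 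Combining with $|g_F(Sx)| \leq \|S\|\,\|x\|$ and taking the supremum over $F \in \mathcal{S}_N$ yields $\|\sum_i a_i \tilde{e}_{n_i}\|_{\mathcal{S}_N} \leq C \|\sum_i a_i \tilde{e}_i\|_{\mathcal{S}_N}$; equivalently, $(\tilde{e}_i)_{i \in I_2}$ dominates $(\tilde{e}_{n_i})_{i \in I_2}$.

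To conclude, I transfer this subsequence dominance into a contradiction with the hypothesis. The weak-null property of $(S\tilde{e}_i)$ forces $\{i : n_i \leq M\}$ to be finite for every $M$, so a further subsequence arranges $n_{i_m}$ to occupy position $\geq m$ inside $J$. The spreading property of $\mathcal{S}_N$ then upgrades the above subsequence dominance to dominance of $(\tilde{e}_{i_m})_m$ over $(\tilde{e}_{\phi_{I, J}(i_m)})_m$, and a standard extension using spreading and unconditionality lifts this to the full dominance of $(\tilde{e}_i)_{i \in I}$ over $(\tilde{e}_j)_{j \in J}$, contradicting the hypothesis applied with $J_1, \dots, J_l$. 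The main obstacle lies precisely in this last step of transferring the ad hoc bijection $i \mapsto n_i$ to the canonical $\phi_{I, J}$, which critically relies on the spreading property of the Schreier families together with Lemma \ref{sch-rij}.
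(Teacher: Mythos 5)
There is a genuine gap in the final step, and it is not a minor one. Your argument produces the following intermediate statement: for some infinite $I_2\subset I$ and an injection $i\mapsto n_i\in J_{j_0}$, the sequence $(\tilde e_i)_{i\in I_2}$ dominates $(\tilde e_{n_i})_{i\in I_2}$ with respect to the correspondence $i\mapsto n_i$. But the hypothesis you must contradict concerns the \emph{whole} sequence $(\tilde e_i)_{i\in I}$ dominating the \emph{whole} sequence $(\tilde e_j)_{j\in J}$ via the canonical order-preserving bijection $\phi_{I,J}$, and your proposed "standard extension using spreading and unconditionality" does not exist. The spreading property yields inequalities in the wrong direction for this transfer: arranging $n_{i_m}$ to sit at position $\geq m$ in $J$, i.e.\ $n_{i_m}\geq j_m$, gives $\|\sum_m a_m\tilde e_{n_{i_m}}\|\leq\|\sum_m a_m\tilde e_{j_m}\|$, a \emph{lower} bound on the quantity you need to bound from above. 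Similarly, since $I_2\subset I$, spreading gives that $(\tilde e_i)_{i\in I}$ dominates $(\tilde e_i)_{i\in I_2}$, so a domination statement for the subsequence $(\tilde e_i)_{i\in I_2}$ cannot be promoted to one for $(\tilde e_i)_{i\in I}$. Domination between subsequences of the Schreier basis is highly position-sensitive (this is exactly what Lemma \ref{sch-rij} exploits), so no such lifting is available. A related structural issue: by pigeonholing to a single $j_0$ and an infinite $I_1\subset I$ you discard most of $I$, whereas any contradiction with the hypothesis requires information about all of $I$.

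The paper's proof avoids this trap. It only follows the first part of the argument of Prop.\ \ref{main-criterion}: it partitions $I$ into sets $I_t$ (rather than passing to an infinite subset), assembles the single bounded operator $R=\sum_t P_J\circ S_t\circ P_{I_t}:[\tilde e_i:i\in I]\to[\tilde e_j:j\in J]$, and checks that $\sup_{j\in J}|(R\tilde e_i)(j)|$ is bounded below for every $i\in I$. It then invokes \cite[Thm 1.1]{gl}, which states precisely that the existence of such an operator forces $(\tilde e_i)_{i\in I}$ to dominate $(\tilde e_j)_{j\in J}$ — with the canonical $\phi_{I,J}$ and for the full sequences. All of the combinatorial difficulty you correctly identify as "the main obstacle" is absorbed by that theorem; your functional-analytic computation with the $g_F$'s is a reasonable fragment of such an argument, but it does not replace it. To repair your proof you should either keep the partition of $I$ and quote \cite[Thm 1.1]{gl}, or supply a genuine proof of the transfer step, which amounts to reproving the Gasparis--Leung theorem.
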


\begin{proof} The reasoning follows the first part of the proof of Prop. \ref{main-criterion}, using results of \cite{gl}.

Take $I$ and $\mathcal{J}$  as in the theorem. Assume there is $Q$ in the operator ideal generated by $\{T_J: J\in \mathcal{J}\}$ with $\|T_I-Q\|<1$. Then there are $Q_t,S_t\in\mathscr{B}(X)$ and $J_t\in\mathcal{J}$, $t=1,\dots,k$,  such that
$\|T_I-\sum_{t=1}^{k}Q_t\circ T_{J_t}\circ S_t\|< 1$. As $T_I(\tilde{e}_i)=e_i$, $i\in I$, the sequence $((\sum_{t=1}^kQ_t\circ T_{J_t}\circ S_t)\tilde{e}_i)_{i\in I}$ is seminormalized. It follows that for any $i\in I$ there is $t_i\in\{1,\dots,k\}$ so that the sequence $((T_{J_{t_i}}\circ S_{t_i})\tilde{e}_i)_{i\in I}$ is seminormalized. For any $t=1,\dots,k$ let $I_t=\{i\in I: t=t_i\}$ and note that $(I_t)_{t=1,\dots,k}$ form a partition of $I$. Let $J=J_1\cup\dots\cup J_k$. 

Define a bounded operator $R=\sum_{t=1}^k P_J\circ S_t\circ P_{I_t}: [\tilde{e}_i: i\in I]\to [\tilde{e}_j: j\in J]$ and notice that for any $i\in I$ we have 
\begin{align*}
\sup_{j\in J}|(R\tilde{e}_i)(j)|&=\sup_{j\in J} |(S_{t_i}\tilde{e}_i))(j)|\geq \sup_{j\in J_{t_i}} |(S_{t_i}\tilde{e}_i))(j)|\\
&=
\sup_{j\in J_{t_i}}|e^*_j(( T_{J_{t_i}}\circ S_{t_i})(\tilde{e}_i))|\geq c\|(T_{J_{t_i}}\circ S_{t_i})\tilde{e}_i\|_{\mathcal{S}_N}    \end{align*}
where $c>0$ is the equivalence constant of $(e_n)_n$ to the unit vector basis of $c_{00}$.
As the sequence $((T_{J_{t_i}}\circ S_{t_i})\tilde{e}_i)_{i\in I}$ is seminormalized, by 
\cite[Thm 1.1]{gl} the sequence $(\tilde{e}_i)_{i\in I}$ dominates $(\tilde{e}_j)_{j
\in J}$, which contradicts the assumptions and ends the proof. 
\end{proof}

\begin{lemma}\label{sch-dyadic}
There is a family $(J_r)_{r\in\R}$ of infinite subsets of $\N$ such that for any pairwise different $r, r_1, \dots, r_k\in\R$, $k\in\N$,  the sequence $(\tilde{e}_i)_{i\in J_r}$ does not dominate the sequence $(\tilde{e}_j)_{j\in J_{r_1}\cup\dots \cup J_{r_k}}$.  
\end{lemma}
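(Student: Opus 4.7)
I plan to build the family via an almost disjoint family combined with a parity-pairing trick. First I would fix an almost disjoint family $\{B_r\}_{r\in\R}$ of infinite subsets of $2\N$ (such families of cardinality $\mathfrak{c}$ exist classically) and set $L_r:=B_r\cup(B_r-1)\subset\N$, so that every $n\in B_r$ contributes the consecutive pair $\{n-1,n\}$ to $L_r$; the family $\{L_r\}_{r\in\R}$ remains almost disjoint. For $x\in\N$ let $\tau(x)$ denote the cardinality of the smallest maximal $\mathcal{S}_{N+1}$ subset of $\N$ with minimum $x$, and choose a rapidly increasing sequence $1=a_0<a_1<\dots$ with $a_{\ell+1}\geq a_\ell+2\tau(a_\ell)$; setting $A_\ell:=[a_\ell,a_{\ell+1})$ I define
$$
J_r:=\bigcup_{\ell\in L_r}A_\ell,
$$
which is an infinite subset of $\N$.

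To verify the conclusion I fix pairwise distinct $r,r_1,\dots,r_k\in\R$, set $I:=J_r$ and $J:=J_{r_1}\cup\cdots\cup J_{r_k}$, and pick any $m\in\N$ (the target number of maximal $\mathcal{S}_N$ sets demanded by Lemma \ref{sch-rij}). By almost disjointness $B_r\setminus\bigcup_s B_{r_s}$ is cofinite in $B_r$, so I may pick $n$ in it with $a_n\geq m$. The key parity observation is that $n-1\in L_r$ while $n-1\notin\bigcup_s L_{r_s}$: being odd, $n-1$ can belong to $L_{r_s}=B_{r_s}\cup(B_{r_s}-1)$ only as $n'-1$ for some even $n'\in B_{r_s}$, which would force $n'=n\notin\bigcup_s B_{r_s}$, a contradiction. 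Hence $A_{n-1}\cap J=A_n\cap J=\emptyset$ while $A_{n-1}\cup A_n\subset I$. Since $a_n\geq 3a_{n-1}$, a direct count shows that $n_1$, the index of $a_n$ in $I$, satisfies $n_1\geq|A_{n-1}|+1>a_{n-1}-1\geq|J\cap[1,a_n)|$; as $J\cap[a_n,a_{n+1})=\emptyset$, the $n_1$-th element $j_{n_1}$ of $J$ sits in some block $A_{\ell_0}$ with $\ell_0\geq n+1$, so $j_{n_1}\geq a_{n+1}$. Inside the full interval $A_n\subset I$ I then take $F_1<\cdots<F_m$ to be the first $m\leq a_n$ maximal $\mathcal{S}_N$ pieces of a maximal $\mathcal{S}_{N+1}$ set based at $a_n$, built from consecutive integers of $A_n$; their total cardinality is bounded by $\tau(a_n)\leq a_{n+1}\leq j_{n_1}$, which is precisely the $\mathcal{S}_1$ condition $|F_1\cup\cdots\cup F_m|\leq\min\phi_{I,J}(F_1\cup\cdots\cup F_m)=j_{n_1}$. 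Lemma \ref{sch-rij} then delivers the desired nondomination.

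The main obstacle is that in a naive almost-disjoint construction the union $\bigcup_s J_{r_s}$ is typically denser than $J_r$ throughout $\N$, so $j_{n_1}$ fails to exceed the required bound. The parity trick $L_r=B_r\cup(B_r-1)$ is what forces, for every exclusive index $n\in B_r\setminus\bigcup_s B_{r_s}$, the predecessor $n-1$ to be exclusive to $L_r$ as well, yielding two consecutive "$J$-silent" blocks $A_{n-1},A_n$; the super-fast growth of $(a_\ell)_\ell$ then amplifies this local absence of $J$ into the quantitative density gap needed to satisfy Lemma \ref{sch-rij}.
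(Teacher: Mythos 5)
Your argument is correct, and it reaches the conclusion by a genuinely different construction than the paper's. The paper attaches finite blocks $F_{\mathbf d}=H_{\mathbf d}\cup G_{\mathbf d}$ to the nodes of a dyadic tree and takes $J_r$ to be the union of the blocks along a branch: the payload $G_{\mathbf d}$ is a union of $|\mathbf d|$ maximal $\mathcal{S}_N$ sets, and the padding $H_{\mathbf d}$, whose cardinality equals the total size of all lexicographically earlier blocks, is what forces the index of $G_{\mathbf d}$ inside $J_{\mathcal B}$ to exceed the number of elements of the competing union lying below it. You achieve the same index shift differently: a continuum-sized almost disjoint family on $2\N$, doubled by the parity pairing $L_r=B_r\cup(B_r-1)$, produces for every index $n$ exclusive to $B_r$ \emph{two} consecutive intervals $A_{n-1},A_n$ that meet $I=J_r$ but miss $J$; the first interval plays the role of the paper's padding $H_{\mathbf d}$ (it pushes the index $n_1$ of $a_n$ in $I$ above $|J\cap[1,a_n)|\leq a_{n-1}-1$), and the second hosts the payload of $m$ consecutive maximal $\mathcal{S}_N$ sets of total size at most $\tau(a_n)\leq a_{n+1}\leq j_{n_1}$. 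Both proofs then invoke the Gasparis--Leung criterion (Lemma \ref{sch-rij}) in the same way. Your route buys a more transparent, purely combinatorial source of the continuum family (almost disjointness rather than branches of a tree) and sets $J_r$ that are unions of intervals with uniform, globally prescribed growth; the paper's tree encoding instead makes the padding sizes adapt to all earlier blocks, which is slightly more delicate to set up but needs no growth estimates on interval lengths. One small point you should make explicit: you also need $n\notin\bigcup_s L_{r_s}$ (not only $n-1\notin\bigcup_s L_{r_s}$) to conclude $A_n\cap J=\emptyset$; this follows from the same parity observation, since $n\in B_{r_s}-1$ would force the odd number $n+1$ into $B_{r_s}\subset 2\N$.
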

\begin{proof}
Let $\mathcal{D}$ be a dyadic tree with the root $\emptyset$, the order $\preceq$, the lexicographic order $\leq_{lex}$ and levels $|\dd|$, $\dd\in\mathcal{D}$. For any $\dd\in\mathcal{D}$, let $\dd^+$ be the immediate successor of $\dd$ in $(\mathcal{D},\leq_{lex})$.  For any $\dd\in\mathcal{D}$ define inductively on $(\mathcal{D}, \leq_{lex})$ a finite set $F_\dd\subset\N$ so that for any $\dd\in\mathcal{D}\setminus\emptyset$ we have
\begin{enumerate}
    \item $F_{\dd^+}>F_\dd$,
    \item $F_\dd=H_\dd\cup G_\dd$, $H_\dd<G_\dd$,
    \item $\# H_\dd=\sum_{\dd'<_{lex}\dd}\# F_{\dd'}$,
    \item $G_\dd=E_1^\dd\cup\dots\cup E_{|\dd|}^\dd$, where   $E_1^\dd<\dots<E_{|\dd|}^\dd$ are maximal $\mathcal{S}_N$ sets.
\end{enumerate}
Given any $\mathcal{A}\subset\mathcal{D}$ let $J_\mathcal{A}=\cup_{\dd\in\mathcal{A}}F_\dd$. Notice that by (3) for any branch $\mathcal{B}$ of $\mathcal{D}$,  any $\dd\in\mathcal{B}$  and any infinite $\mathcal{A}\subset \mathcal{D}\setminus \{\dd\}$ we have $\phi_{J_\mathcal{B}, J_\mathcal{A}}(F_\dd)>\max F_\dd$, thus $\phi_{J_\mathcal{B}, J_\mathcal{A}}(G_\dd)$ is an $\mathcal{S}_1$ set. By Lemma \ref{sch-rij} for any branch $\mathcal{B}$ of $\mathcal{D}$ and any infinite $\mathcal{A}\subset\mathcal{D}$ with $\mathcal{B}\setminus\mathcal{A}$ infinite, the sequence $(\tilde{e}_i)_{i\in J_\mathcal{B}}$ does not dominate the sequence $(\tilde{e}_j)_{j\in J_\mathcal{A}}$. In particular the family  $\{J_\mathcal{B}: \ \mathcal{B}\ - \text{ a branch of }\mathcal{D}\}$ satisfy the assertion of the Lemma.

\end{proof}

\begin{theorem}\label{main-schreier} Fix $N\in\N$, $N\geq 1$. There is a family of small closed operator ideals $(I_A)_{A\subset\R}$ on the Schreier space $X[\mathcal{S}_N]$ such that $I_A\subset I_B\Leftrightarrow A\subset B$, for any $A,B\subset \R$. In particular there are exactly $2^\mathfrak{c}$ distinct small closed  ideals on the Schlumprecht space. There is also a chain of cardinality $\mathfrak{c}$ of small closed ideals and an antichain of cardinality $2^\mathfrak{c}$ of small closed ideals on the Schreier space $X[\mathcal{S}_N]$.
 \end{theorem}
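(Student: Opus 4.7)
The plan is to mirror the proof of Theorem \ref{main}, packaging Lemma \ref{sch-dyadic} together with Proposition \ref{sch-main-criterion} into the family $(I_A)_{A\subset\R}$. First, I would invoke Lemma \ref{sch-dyadic} to obtain the family $(J_r)_{r\in\R}$ and set $T_r:=T_{J_r}$; each $T_r$ is bounded and strictly singular, as recalled just before Proposition \ref{sch-main-criterion}. For every $A\subset\R$, I define $I_A$ to be the closed operator ideal in $\mathscr{B}(X[\mathcal{S}_N])$ generated by $\{T_r:r\in A\}$; since every $T_r\in\mathscr{S}(X[\mathcal{S}_N])$, the ideal $I_A$ is automatically small.

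The direction $A\subset B\Rightarrow I_A\subset I_B$ is immediate from the definition of the ideal generated by a set. For the converse, the key step is to show that $r\in\R\setminus A$ implies $T_r\notin I_A$; granted this, any $r\in A\setminus B$ witnesses $T_r\in I_A\setminus I_B$, so $I_A\not\subset I_B$. To establish $T_r\notin I_A$, I fix an arbitrary finite collection $r_1,\dots,r_k\in A$ (all automatically distinct from $r$); by Lemma \ref{sch-dyadic} the sequence $(\tilde e_i)_{i\in J_r}$ does not dominate $(\tilde e_j)_{j\in J_{r_1}\cup\cdots\cup J_{r_k}}$. Hence the hypothesis of Proposition \ref{sch-main-criterion} holds with $I=J_r$ and $\mathcal{J}=\{J_s:s\in A\}$, yielding $\|T_r-Q\|\geq 1$ for every $Q$ in the ideal generated by $\{T_{J_s}:s\in A\}=I_A$; in particular $T_r\notin I_A$.

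The ``exactly $2^{\mathfrak c}$'' count then combines the $2^{\mathfrak c}$ pairwise distinct ideals just produced with the standard upper bound: since $X[\mathcal{S}_N]$ is separable, $|\mathscr{B}(X[\mathcal{S}_N])|=\mathfrak c$, so at most $2^{\mathfrak c}$ closed ideals can exist in total. For the chain of cardinality $\mathfrak c$, I would apply the map $A\mapsto I_A$ to any $\subset$-chain in $\mathcal{P}(\R)$ of size $\mathfrak c$, for instance $\{(-\infty,r):r\in\R\}$. For the antichain of size $2^{\mathfrak c}$, I would apply the same map to any antichain of that cardinality in $(\mathcal{P}(\R),\subset)$, such as one obtained from a standard almost-disjoint family construction on $\R$.

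I expect no real obstacle at this stage: both the combinatorial construction of the indexing sets (Lemma \ref{sch-dyadic}) and the operator-theoretic separation (Proposition \ref{sch-main-criterion}) have already been crafted to handle arbitrary finite unions $J_{r_1}\cup\cdots\cup J_{r_k}$, which is exactly the generality needed to rule $T_r$ out of the closed ideal generated by a whole collection of $T_{J_s}$'s. The final theorem is then the bookkeeping that threads these two ingredients together, fully parallel to the way Lemma \ref{dyadic} and Proposition \ref{main-criterion} were combined in the proof of Theorem \ref{main}.
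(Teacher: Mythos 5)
Your proposal is correct and follows essentially the same route as the paper: both define $I_A$ as the closed ideal generated by $\{T_{J_r}:r\in A\}$ with $(J_r)_{r\in\R}$ from Lemma \ref{sch-dyadic}, and both separate $T_{J_r}$ from $I_A$ for $r\notin A$ by verifying the finite-union non-domination hypothesis of Proposition \ref{sch-main-criterion}. The extra bookkeeping you supply (the $2^{\mathfrak c}$ upper bound from separability and the explicit chain/antichain in $\mathcal{P}(\R)$) is exactly what the paper leaves implicit.
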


\begin{proof} Take a family $(T_{J_r})_{r\in\R}\subset \mathscr{S}(X_S)$ with $(J_r)_{r\in\R}$ as in Lemma \ref{sch-dyadic} and for any $A\subset\R$ define $I_A$ to be the closed operator ideal generated by $(T_{J_r})_{r\in A}$. Then obviously $A\subset B$ implies $I_A\subset I_B$. On the other hand, fix $A\subset\R$ and $r\in \R\setminus A$. Then by the choice of $(J_r)_{r\in \R}$ sets  $J_r$ and $(J_s)_{s\in A}$  satisfy the assumptions of Prop. \ref{sch-main-criterion}, thus in particular $T_{J_r}\not\in I_A$. In other words $T_{J_r}\in I_A$ implies $r\in A$, which finishes the proof.
\end{proof}

\subsection{Operators factoring through $c_0$}
As $X[\mathcal{S}_N]$ contains a copy of $c_0$, by Sobczyk theorem the ideal $\overline{\mathscr{G}}_{c_0}(X[\mathcal{S}_N])$, i.e. the closure of the ideal of operators factoring through $c_0$ is not contained in $\mathscr{S}(X[\mathcal{S}_N])$. We show below that in the case $N\geq 2$ the reverse inclusion does not hold either, answering one of the questions of \cite{bkl}. The case $N=1$ remains open. 

The following lemma is proved in \cite[Prop. 0.7]{cs} in the case of $N=1$ (recall that $X[\mathcal{S}_0]=c_0$).

\begin{lemma}\label{sch-av} For any $N\in\N$, $N\geq 1$, and any sequence $(E_m)_{m\in \N}$ of finite subsets of $\N$ with $\# E_m\leq\min E_m$, $\# E_{m+1}>2\# E_m\max E_m$, $m\in\N$ the sequence of averages $(y_m)_m\subset X[\mathcal{S}_N]$ defined as $y_m=\frac{1}{\# E_m}\sum_{n\in E_m}\tilde{e}_n$, $m\in\N$, is 2-equivalent to the subsequence
$(\tilde{e}_{s_m})_m$ of the basis of $X[\mathcal{S}_{N-1}]$, where $s_m=\max E_m$, $m\in\N$.
\end{lemma}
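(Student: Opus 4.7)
The plan is to establish the 2-equivalence by verifying the two separate inequalities
\[
\tfrac{1}{2}\,\Bigl\|\sum_m a_m\tilde e_{s_m}\Bigr\|_{\mathcal{S}_{N-1}}\ \le\ \Bigl\|\sum_m a_m y_m\Bigr\|_{\mathcal{S}_N}\ \le\ 2\,\Bigl\|\sum_m a_m\tilde e_{s_m}\Bigr\|_{\mathcal{S}_{N-1}}
\]
for every finitely supported $(a_m)$. The growth hypothesis immediately yields $\min E_{m+1}\ge\#E_{m+1}>2\max E_m$, so the $E_m$'s are block-disjoint, and for any $A\subset\N$
\[
\sum_{n\in A}\Bigl|\sum_m a_m y_m(n)\Bigr|\ =\ \sum_m|a_m|\,\frac{\#(A\cap E_m)}{\#E_m}.
\]

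For the upper bound, I would invoke the Schreier-convolution identity $\mathcal{S}_N=\mathcal{S}_{N-1}[\mathcal{S}_1]$ (obtained by pushing the defining recursive $\mathcal{S}_1$-unions of $\mathcal{S}_N$ down to the base level) to decompose any $A\in\mathcal{S}_N$ as $A=\bigsqcup_{j=1}^{l}F_j$ with $F_j\in\mathcal{S}_1$, $F_1<\cdots<F_l$ and $\{\min F_j\}_{j=1}^{l}\in\mathcal{S}_{N-1}$. For each $F_j$, enumerate $M(F_j):=\{m:E_m\cap F_j\ne\emptyset\}$ as $m_{j,1}<m_{j,2}<\cdots$ and use the $\mathcal{S}_1$ bound $\#F_j\le\min F_j$ together with the doubling $\#E_{m+1}>2\max E_m$ to see that $\#(F_j\cap E_{m_{j,i}})/\#E_{m_{j,i}}\le 2^{-(i-1)}$; summing the geometric series gives
\[
\sum_m|a_m|\frac{\#(F_j\cap E_m)}{\#E_m}\ \le\ 2\max_{m\in M(F_j)}|a_m|.
\]
I would then pick, for each $j$, a representative $m_j^*\in M(F_j)$ attaining the maximum, so that $s_{m_j^*}\ge\min F_j$. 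A careful choice of the $m_j^*$ (using the doubling of the $E_m$'s to limit how many $F_j$'s a single $E_m$ can dominate) ensures the sequence $(s_{m_j^*})_j$ is strictly increasing, and the spreading property of $\mathcal{S}_{N-1}$ applied to $\{\min F_j\}$ places $\{s_{m_j^*}\}\in\mathcal{S}_{N-1}$, so $\sum_j|a_{m_j^*}|\le\|\sum_m a_m\tilde e_{s_m}\|_{\mathcal{S}_{N-1}}$ and the upper bound follows.

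For the lower bound, fix $C=\{s_{m_1}<\cdots<s_{m_l}\}\in\mathcal{S}_{N-1}$ (almost) realising $\|\sum_m a_m\tilde e_{s_m}\|_{\mathcal{S}_{N-1}}=\sum_{i=1}^{l}|a_{m_i}|$ and dichotomise. If $|a_{m_1}|\ge\tfrac12\sum_i|a_{m_i}|$, take $A:=E_{m_1}\in\mathcal{S}_1\subseteq\mathcal{S}_N$: then $\sum_{n\in A}|\sum_m a_m y_m(n)|=|a_{m_1}|$ already witnesses the lower bound. Otherwise $\sum_{i\ge 2}|a_{m_i}|\ge\tfrac12\|\sum_m a_m\tilde e_{s_m}\|_{\mathcal{S}_{N-1}}$, and I take $A:=\bigsqcup_{i=2}^{l}E_{m_i}$. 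To confirm $A\in\mathcal{S}_N$ observe that the growth hypothesis gives $\min E_{m_i}\ge\#E_{m_i}>2\max E_{m_{i-1}}\ge 2s_{m_{i-1}}>s_{m_{i-1}}$ for $i\ge 2$, so $(\min E_{m_i})_{i\ge 2}$ dominates termwise $(s_{m_1},\ldots,s_{m_{l-1}})\subset C\in\mathcal{S}_{N-1}$; the spreading property places $\{\min E_{m_i}\}_{i\ge 2}\in\mathcal{S}_{N-1}$, and Schreier convolution yields $A\in\mathcal{S}_{N-1}[\mathcal{S}_1]=\mathcal{S}_N$. Finally $\sum_{n\in A}|\sum_m a_m y_m(n)|=\sum_{i\ge 2}|a_{m_i}|\ge\tfrac12\|\sum_m a_m\tilde e_{s_m}\|_{\mathcal{S}_{N-1}}$.

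The main obstacle is the representative-selection step in the upper bound: a single ``wide'' $E_m$ can a priori meet several $F_j$'s, so a naive $m_j^*=\arg\max_{m\in M(F_j)}|a_m|$ might repeat across several $j$'s and inflate $\sum_j|a_{m_j^*}|$ by a factor of $l$. Resolving this requires exploiting the interplay between the doubling $\min E_{m+1}>2\max E_m$ and the $\mathcal{S}_1$-admissibility of the $F_j$'s: the rapid growth restricts how many $F_j$'s one $E_m$ can dominate while preserving $\{\min F_j\}\in\mathcal{S}_{N-1}$, and then spreading from $\{\min F_j\}$ finishes the job. Everything else (Schreier convolution, subset-closure of Schreier families, spreading) is routine.
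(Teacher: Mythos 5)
Your proof has the same skeleton as the paper's: for the lower bound you isolate the first set $E_{m_1}$ and use the spreading property to place $\bigcup_{i\ge 2}E_{m_i}$ in $\mathcal{S}_N$ (the paper phrases this as ``$G$ is a union of two $\mathcal{S}_N$ sets'' rather than as a dichotomy, but the content and the constant are the same), and for the upper bound you decompose an $\mathcal{S}_N$ set into successive $\mathcal{S}_1$ blocks $F_j$ with $\{\min F_j\}\in\mathcal{S}_{N-1}$ (the paper invokes Lemma~3.8 of Gasparis--Leung for maximal sets) and bound each block's contribution by a constant times a single coefficient. Your per-block estimate $\#(F_j\cap E_{m_{j,i}})/\#E_{m_{j,i}}\le 2^{-(i-1)}$ is correct (it follows from $\#(F_j\cap E_{m_{j,i}})\le\min F_j\le\max E_{m_{j,1}}$ and the doubling of $\#E_m$; the paper instead cites [CS, Prop.~0.7] to get constant $1$), and the lower bound is complete.

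The genuine gap is exactly where you place it, and the repair you sketch does not work: the growth of the $E_m$'s does \emph{not} limit how many blocks $F_j$ a single $E_m$ can meet. Successive maximal $\mathcal{S}_1$ sets satisfy $\#F_j=\min F_j$, so their lengths roughly double along $\N$; a set such as $E_m=\{100,\,200,\,400,\dots,100\cdot 2^{p-1}\}$ with $p=\#E_m\le 100=\min E_m$ places one point in each of $p$ consecutive such blocks, with $\{\min F_j\}$ still comfortably in $\mathcal{S}_{N-1}$, and $p$ is unbounded. Hence the multiplicity of $j\mapsto m_j^*$ admits no absolute bound and $\sum_j|a_{m_j^*}|$ can exceed any fixed multiple of $\|\sum_m a_m\tilde{e}_{s_m}\|_{\mathcal{S}_{N-1}}$. (The paper's own proof asserts at this point that each $E_m$ meets at most two of the $F_i$'s, which is the same difficulty.) The correct fix is not to take the maximum over all of $M(F_j)$ but to split each block's sum into the term coming from $m_{j,1}$ and the terms with $i\ge 2$. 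For the latter, note that a given $E_m$ can be a \emph{non-first} member of $M(F_j)$ for at most one $j$ (an earlier $E_{m'}$ meeting $F_j$ forces $\max E_{m'}<\min E_m\le\max F_j<\min F_{j+1}$, so no earlier $E_{m'}$ reaches $F_{j+1}$); thus picking a maximizer of $|a_{m_{j,i}}|$ over $i\ge 2$ yields an injective increasing selection $j\mapsto\nu_j$ with $s_{\nu_j}\ge\min F_j$, and spreading bounds this part by $\|\sum_m a_m\tilde{e}_{s_m}\|_{\mathcal{S}_{N-1}}$. For the first terms, regroup by $m$ and use $\sum_{j:\,m_{j,1}=m}\#(F_j\cap E_m)/\#E_m\le 1$ together with the injective increasing assignment $m\mapsto\min\{j:m_{j,1}=m\}$; this part is again at most $\|\sum_m a_m\tilde{e}_{s_m}\|_{\mathcal{S}_{N-1}}$, and adding the two pieces gives the constant $2$.
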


\begin{proof} We prove first that $(y_m)_m\subset X[\mathcal{S}_N]$ is 2-dominated by $(\tilde{e}_{s_m})_m\subset X[\mathcal{S}_{N-1}]$. 

Take a finite sequence of  scalars $(a_m)_m$ and a maximal $\mathcal{S}_N$ set $F$  with $\|\sum_ma_my_m\|_{\mathcal{S}_N}=\|\sum_ma_my_m|_F\|_{\ell_1}$. By \cite[Lemma 3.8]{gl} $F=\cup_{i=1}^kF_i$, where $F_1<\dots<F_k$ are maximal members of $\mathcal{S}_1$ and $(\min F_i)_{i=1}^k\in\mathcal{S}_{N-1}$. for each $i=1,\dots,k$ let $G_i=\{m\in\N: F_i\cap E_m\neq\emptyset\}$ and pick $m_i\in G_i$ with $|a_{m_i}|=\max\{|a_m|: m\in G_i\}$. Estimate, using the fact that $(y_m)_m$ in $X[\mathcal{S}_1]$ is 1-equivalent to the unit vector basis of $c_0$ (\cite[Prop. 0.7]{cs}),
$$
\|\sum_ma_my_m|_F\|_{\ell_1}=
\sum_{i=1}^k\|\sum_{m\in G_i}a_my_m|_{F_i}\|_{\ell_1}\leq \sum_{i=1}^k\|\sum_{m\in G_i}a_my_m\|_{\mathcal{S}_1}\leq
\sum_{i=1}^k
|a_{m_i}|\leq
\dots
$$
Notice that for any $m\in\N$, as each $F_i$ is a maximal $\mathcal{S}_1$ set, there can be at most 2 sets from the family $(F_i)_{i=1}^k$ intersecting $E_m$, thus each $m_i$ can appear in the above sum at most twice. Thus, as $s_{m_i}\geq \min F_i$, $i=1,\dots, k$, and $(\min F_i)_{i=1}^k\in\mathcal{S}_{N-1}$, 
we conclude
$$
\dots\leq 2\|\sum_m a_{m}\tilde{e}_{s_m}\|_{\mathcal{S}_{N-1}}
$$

Now we prove that $(\tilde{e}_{s_m})_m\subset X[\mathcal{S}_{N-1}]$ is 2-dominated by $(y_m)_m\subset X[\mathcal{S}_N]$. Take a finite sequence of scalars $(a_m)_m$ and  a set $F\in\mathcal{S}_{N-1}$ such that $\|\sum_ma_m\tilde{e}_{s_m}\|_{\mathcal{S}_{N-1}}=\|\sum_{s_m\in F}a_m\tilde{e}_{s_m}\|_{\ell_1}$. As $s_m\leq E_{m+1}$ for each $m$, the set $G=\bigcup_{s_m\in F}E_m$ is a union of two $\mathcal{S}_N$ sets, thus 
$$
2\|\sum_ma_my_m\|_{\mathcal{S}_N}\geq \sum_{s_m\in F}|a_m|\sum_{n\in E_m}\frac{1}{\# E_m}=\sum_{s_m\in F}|a_m|
$$
which ends the proof.
\end{proof}

The following observation extends \cite[Lemma 4.14]{bkl}.

\begin{proposition} The formal identity operator $R_N:X[\mathcal{S}_N]\to X[\mathcal{S}_{N-1}]$, $N\in\N, N\geq 1$ is strictly singular.

Moreover, if $N\geq 2$, then any bounded operators $T:X[\mathcal{S}_N]\to c_0$ and $S:c_0\to X[\mathcal{S}_{N-1}]$ satisfy
$\|R_N-S\circ T\|\geq 1$.

\end{proposition}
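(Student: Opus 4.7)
The plan is to handle the two assertions separately, both relying on Lemma \ref{sch-av} and, in the second case, on a Bessaga--Pe\l czy\'nski dichotomy for $(T\tilde{e}_n)$ inside $c_0$.

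\textbf{Strict singularity of $R_N$.} I would argue by induction on $N$, taking \cite[Lemma 4.14]{bkl} (the formal identity $X[\mathcal{S}_N]\to c_0$ is strictly singular) as the base case $N=1$. For the inductive step ($N\geq 2$), given an infinite-dimensional subspace $Z\subseteq X[\mathcal{S}_N]$, I first pass (by a standard perturbation using that $(\tilde{e}_n)$ is shrinking) to a normalized block sequence of $(\tilde{e}_n)$ close to $Z$, and then extract from this block subspace averages $y_m=\frac{1}{\#E_m}\sum_{n\in E_m}\tilde{e}_n$ as in Lemma \ref{sch-av}. The lemma gives that $(y_m)$ is $2$-equivalent in $X[\mathcal{S}_N]$ to the basis of $X[\mathcal{S}_{N-1}]$; applying the same lemma at level $N-1$ (legitimate since $N\geq 2$) gives that $(y_m)$ is $2$-equivalent in $X[\mathcal{S}_{N-1}]$ to the basis of $X[\mathcal{S}_{N-2}]$. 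Combining the two equivalences, $R_N$ restricted to $\overline{[y_m]}$ is, up to the equivalence constants, conjugate to $R_{N-1}$ on the block subspace $[\tilde{e}_{s_m}]\subseteq X[\mathcal{S}_{N-1}]$, which is not bounded below by the inductive hypothesis; hence $R_N$ is not bounded below on $Z$.

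\textbf{The inequality $\|R_N-S\circ T\|\geq 1$.} The strategy is to produce a sequence of unit vectors $(z_k)\subseteq X[\mathcal{S}_N]$ with $\|z_k\|_{\mathcal{S}_{N-1}}=1$ and $\|Tz_k\|_{c_0}\to 0$; since $R_N z_k=z_k$ in $X[\mathcal{S}_{N-1}]$,
\[
\|R_Nz_k-S\circ Tz_k\|_{\mathcal{S}_{N-1}}\geq\|z_k\|_{\mathcal{S}_{N-1}}-\|S\|\,\|Tz_k\|_{c_0}\to 1,
\]
while $\|z_k\|_{\mathcal{S}_N}=1$, so $\|R_N-S\circ T\|\geq 1$. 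Since $(\tilde{e}_n)$ is shrinking, the sequence $(T\tilde{e}_n)\subseteq c_0$ is weakly null, and Bessaga--Pe\l czy\'nski provides a subsequence $(T\tilde{e}_{n_k})$ which either converges to $0$ in norm or is $(1+\varepsilon)$-equivalent to the unit vector basis of $c_0$. In the first case take $z_k=\tilde{e}_{n_k}$, so that $\|z_k\|_{\mathcal{S}_N}=\|z_k\|_{\mathcal{S}_{N-1}}=1$ and $\|Tz_k\|_{c_0}\to 0$ trivially. In the second case pick $E_m\subseteq\{n_k:k\in\N\}$ with $\#E_m\leq\min E_m$ and $\#E_m\to\infty$, and put $z_m=\frac{1}{\#E_m}\sum_{n\in E_m}\tilde{e}_n$: the $c_0$-basis equivalence gives $\|Tz_m\|_{c_0}\leq(1+\varepsilon)\|T\|/\#E_m\to 0$, while the condition $\#E_m\leq\min E_m$ places $E_m$ in $\mathcal{S}_1\subseteq\mathcal{S}_{N-1}$ (the inclusion requiring $N\geq 2$), so $\|z_m\|_{\mathcal{S}_N}=\|z_m\|_{\mathcal{S}_{N-1}}=1$.

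\textbf{Main obstacles.} In part (1), the less routine point is running the averaging construction inside an arbitrary subspace $Z$; this requires a standard perturbation to a block subspace of $(\tilde{e}_n)$ combined with a block-vector variant of Lemma \ref{sch-av}, which should be routine in the Schreier-space context. In part (2), the critical hypothesis is $N\geq 2$: the inclusion $\mathcal{S}_1\subseteq\mathcal{S}_{N-1}$ is exactly what keeps $\|z_m\|_{\mathcal{S}_{N-1}}$ equal to $1$, and it breaks at $N=1$, where $\mathcal{S}_{N-1}=\mathcal{S}_0$ reduces to the $c_0$-norm and the averages $z_m$ shrink to $1/\#E_m$; this is exactly why the case $N=1$ is left open.
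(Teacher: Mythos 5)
Your argument for the second assertion is correct and takes a genuinely different, more direct route than the paper's: instead of deriving a contradiction from the fact that $(\tilde e_n)_{n\in M}$ in $X[\mathcal{S}_{N-1}]$ would be dominated by a sequence equivalent to the $c_0$ basis (which is how the paper concludes), you exhibit explicit vectors $z_m$ --- basis vectors or admissible averages, according to which horn of the Bessaga--Pe{\l}czy\'nski dichotomy holds for $(T\tilde e_n)$ --- with $\|z_m\|_{\mathcal{S}_N}=\|z_m\|_{\mathcal{S}_{N-1}}=1$ and $\|Tz_m\|_{c_0}\to0$, which yields $\|R_N-S\circ T\|\geq1$ immediately. (A minor quibble: the extracted subsequence of $(T\tilde e_n)$ is only $C$-equivalent to the $c_0$ basis for a constant $C$ depending on $\sup_n\|T\tilde e_n\|/\inf_n\|T\tilde e_n\|$, not $(1+\varepsilon)$-equivalent, but the estimate $\|\sum_{n\in E_m}T\tilde e_n\|\leq C\|T\|$ is all you use.) Your identification of the inclusion $\mathcal{S}_1\subset\mathcal{S}_{N-1}$ as the place where $N\geq2$ enters is exactly right.

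The first assertion is where there is a genuine gap. Your inductive step rests on ``a block-vector variant of Lemma \ref{sch-av}'', and this is not routine: the straightforward analogue is false. By Odell's theorem (cited as \cite{o} in the paper) every block subspace of $X[\mathcal{S}_N]$ contains a normalized block sequence $(z_n)$ equivalent to the unit vector basis of $c_0$ in $\|\cdot\|_{\mathcal{S}_N}$; for such blocks $\bigl\|\frac{1}{\# E}\sum_{n\in E}z_n\bigr\|_{\mathcal{S}_N}\leq C/\# E\to0$, so the averages are not even seminormalized, let alone $2$-equivalent to a subsequence of the basis of $X[\mathcal{S}_{N-1}]$. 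The proof of Lemma \ref{sch-av} uses essentially that the averaged vectors are single basis vectors (each maximal $\mathcal{S}_1$ set meets at most two of the sets $E_m$ and picks up mass at most $|a_{m}|$ from each average); none of this survives for general blocks, so the conjugation of $R_N|_{[y_m]}$ to $R_{N-1}|_{[\tilde e_{s_m}]}$ is unjustified. The paper's inductive step runs in the opposite direction: assuming $\|\cdot\|_{\mathcal{S}_N}$ and $\|\cdot\|_{\mathcal{S}_{N-1}}$ are equivalent on a block subspace $Z$, it uses the inductive hypothesis (strict singularity of $R_{N-1}$ on $Z$ viewed inside $X[\mathcal{S}_{N-1}]$) to produce blocks $z_n\in Z$ with $\|z_n\|_{\mathcal{S}_{N-1}}=1$ and $\|z_n\|_{\mathcal{S}_{N-2}}$ summably small, and then checks that $z=\sum_{n=K+1}^{2K}z_n$ satisfies $\|z\|_{\mathcal{S}_N}\geq K$ (a union of $K$ admissibly placed $\mathcal{S}_{N-1}$ sets lies in $\mathcal{S}_N$) while $\|z\|_{\mathcal{S}_{N-1}}\leq2$ (any $\mathcal{S}_{N-1}$ set splits into at most $\max\supp z_{n_0}$ many $\mathcal{S}_{N-2}$ sets). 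You would need to replace your averaging step by an argument of this kind.
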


\begin{proof} We prove strict singularity by induction on $N$. The case $N=1$ follows by \cite[Lemma 4.14]{bkl}. Let $N\geq 2$ and assume $R_{N-1}$ is strictly singular, whereas   $R_N$ is not strictly singular. Then on some block subspace $Z\subset X[\mathcal{S}_N]$ norms $\|\cdot\|_{\mathcal{S}_N}$ and $\|\cdot\|_{\mathcal{S}_{N-1}}$ are equivalent. As $R_{N-1}$ is strictly singular there is a block sequence $(z_n)_n\subset Z$ with $\|z_n\|_{\mathcal{S}_{N-1}}=1$ and $\|z_n\|_{\mathcal{S}_{N-2}}\leq \frac{1}{2^n\max\supp z_{n-1}}$. Fix $K\in\N$ and Let $z=\sum_{n=K+1}^{2K}z_n$. Note that $\|z\|_{\mathcal{S}_N}\geq K$. 

In order to estimate $\|z\|_{\mathcal{S}_{N-1}}$ pick $F\in\mathcal{S}_{N-1}$. Then $F=\sum_{l=1}^LF_l$ for some $F_1<\dots<F_L\in\mathcal{S}_{N-2}$ and $L\leq \min F_1$. Let $n_0=\min\{n\in\N:z_n|_F\neq 0\}$, obviously  $L\leq \max\supp z_{n_0}$. Estimate
\begin{align*}
\|z|_F\|_{\ell_1}&\leq\|z_{n_0}|_F\|_{\ell_1}+ \sum_{n=n_0+1}^K\sum_{l=1}^L\|z_n|_{F_l}\|_{\ell_1}\\
&\leq 1+\sum_{n=n_0+1}^KL\|z_n\|_{\mathcal{S}_{N-2}}\\
&\leq 1+\sum_{n>n_0}\frac{1}{2^n}\leq 2
\end{align*}
As $K\in\N$ is arbitrary this yields contradiction and ends the proof of strict singularity of $R_N$.     

Now let $N\geq 2$. Assume for some bounded operators 
$T:X[\mathcal{S}_N]\to c_0$ and $S:c_0\to X[\mathcal{S}_{N-1}]$ we have
$\|R_N-S\circ T\|< 1$. As $R_N(\tilde{e}_n)(n)=1$, $n\in\N$, it follows that 
\begin{align*}
\delta:=\inf_n(S\circ T)\tilde{e}_n)(n)>0
\end{align*}

As $(\tilde{e}_n)_n$ is weakly null, $(T\tilde{e}_n)_n$ and $((S\circ T)\tilde{e}_n)_n$ are also weakly null, thus we can choose some infinite $M\subset\N$ such that  the sequence $(T\tilde{e}_n)_{n\in M}$ is equivalent to the unit vector basis of $c_0$, whereas $|((S\circ T)\tilde{e}_n)(m)|<\frac{1}{2^{n+m+1}}$ for any $n,m\in M$, $n\neq m$. 

Let $P_M:X[\mathcal{S}_{N-1}]\to [\tilde{e}_n: n\in M]\subset X[\mathcal{S}_{N-1}]$ be the canonical basis projection. By the choice of $M$ we have $(P_M\circ S)(T\tilde{e}_n)=\alpha_n\tilde{e}_n+w_n$ with $\alpha_n\geq \delta$ and $\|w_n\|_{\mathcal{S}_{N-1}}\leq\|w_n\|_{\ell_1}\leq \frac{1}{2^n}$ for any $n\in M$. Therefore the sequence $(\tilde{e}_n)_{n\in M}$ in $X[\mathcal{S}_{N-1}]$, as equivalent to $((P_M\circ S)(T\tilde{e}_n))_{n\in M}$, is dominated by $(T\tilde{e}_n)_{n\in M}$. Thus we obtain a contradiction, as $(T\tilde{e}_n)_{n\in M}$ is equivalent to the unit vector basis of $c_0$. 

\end{proof}

\begin{corollary}
$\mathscr{S}(X[\mathcal{S}_N])\not\subset\overline{\mathscr{G}}_{c_0}(X[\mathcal{S}_N])$ for any  $N\in\N$, $N\geq 2$.
\end{corollary}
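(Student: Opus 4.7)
The plan is to exhibit an operator $U\in\mathscr{S}(X[\mathcal{S}_N])$ with $\|U-ST\|\geq 1$ for every $S:c_0\to X[\mathcal{S}_N]$ and $T:X[\mathcal{S}_N]\to c_0$, which places $U$ outside $\overline{\mathscr{G}}_{c_0}(X[\mathcal{S}_N])$. The scheme parallels the proof of the preceding proposition: there the ``peak'' $R_N\tilde e_n(n)=1$ was combined with a $c_0$-domination argument; here the role of the point evaluation $\tilde e_n^*$ is played by the average-type functional $f_m:=\sum_{j\in E_m}\tilde e_j^*$ which detects the flat $\ell_1$-shape of an average $y_m$.

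Fix $(E_m)_m$ as in Lemma \ref{sch-av} (so $\#E_m\leq\min E_m$, hence $E_m\in\mathcal{S}_1$), put $y_m=\frac{1}{\#E_m}\sum_{n\in E_m}\tilde e_n$ and $s_m=\max E_m$. By that lemma $(y_m)\subset X[\mathcal{S}_N]$ is $2$-equivalent to $(\tilde e_{s_m})\subset X[\mathcal{S}_{N-1}]$, so the rule $\tilde e_{s_m}\mapsto y_m$ and $\tilde e_n\mapsto 0$ for $n\notin\{s_m\}_m$ extends (by $1$-unconditionality of the basis of $X[\mathcal{S}_{N-1}]$) to a bounded operator $V:X[\mathcal{S}_{N-1}]\to X[\mathcal{S}_N]$. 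Set $U:=V\circ R_N\in\mathscr{B}(X[\mathcal{S}_N])$: it is strictly singular since $R_N$ is, and $U\tilde e_{s_m}=y_m$. Assume toward contradiction that $\|U-ST\|<\eta$ for some $\eta<1$. Since $E_m\in\mathcal{S}_1\subseteq\mathcal{S}_N$, the functional $f_m$ lies in the unit ball of $X[\mathcal{S}_N]^*$ and $f_m(y_m)=1$, so $f_m(ST\tilde e_{s_m})\geq 1-\eta$ for every $m$.

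As the basis of $X[\mathcal{S}_N]$ is shrinking, $(\tilde e_{s_m})_m$ is weakly null, so $(T\tilde e_{s_m})_m$ is weakly null in $c_0$; after passing to a subsequence we may assume it is $(1+\epsilon)$-equivalent to the unit basis of $c_0$ for some prescribed small $\epsilon>0$. Since $\min E_m\to\infty$ and the basis is shrinking, $(f_m)_m$ is weak$^*$-null; coupled with weak nullity of $(ST\tilde e_{s_m})$ a diagonal argument yields a further subsequence $(m_k)_k$ with $|f_{m_k}(ST\tilde e_{s_{m_j}})|<\eta/2^{k+j}$ for $k\neq j$. For any $K\in\N$, extract $k_1<\dots<k_K$ from $(m_k)_k$ with $\min E_{m_{k_1}}\geq K$ (possible since $\min E_m\to\infty$). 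Here the hypothesis $N\geq 2$ is used crucially: each $E_{m_{k_i}}\in\mathcal{S}_1\subseteq\mathcal{S}_{N-1}$, and since $E_{m_{k_1}}<\dots<E_{m_{k_K}}$ with $K\leq\min E_{m_{k_1}}$, the definition of $\mathcal{S}_N$ gives $\bigcup_{i=1}^K E_{m_{k_i}}\in\mathcal{S}_N$, whence $\Phi:=\sum_{i=1}^K f_{m_{k_i}}$ has norm at most $1$ on $X[\mathcal{S}_N]$. Setting $z:=\sum_{i=1}^K T\tilde e_{s_{m_{k_i}}}\in c_0$, $\|z\|_{c_0}\leq 1+\epsilon$, the diagonal/off-diagonal split yields
$$
\Phi(Sz)=\sum_{i,j=1}^K f_{m_{k_i}}(ST\tilde e_{s_{m_{k_j}}})\geq K(1-\eta)-\eta,
$$
while also $\Phi(Sz)\leq(1+\epsilon)\|S\|$. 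Since $1-\eta>0$, these estimates contradict each other for $K$ sufficiently large, completing the proof.

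The principal technical obstacle is the admissibility assertion $\bigcup_{i=1}^K E_{m_{k_i}}\in\mathcal{S}_N$, which is precisely where $N\geq 2$ enters: only then can the $\mathcal{S}_1$-blocks be absorbed as $\mathcal{S}_{N-1}$-pieces of an $\mathcal{S}_N$-admissible union under the mild condition $K\leq\min E_{m_{k_1}}$. For $N=1$ this route breaks down, consistent with the corollary being left open in that case.
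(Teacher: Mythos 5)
Your argument is correct, and it actually supplies a step that the paper leaves implicit. The paper states the corollary with no proof, expecting it to follow from the preceding proposition; but that proposition concerns $R_N\colon X[\mathcal{S}_N]\to X[\mathcal{S}_{N-1}]$, an operator between two \emph{different} spaces, whereas the corollary is about the ideal lattice of $\mathscr{B}(X[\mathcal{S}_N])$. Transferring the proposition to an endomorphism of $X[\mathcal{S}_N]$ is exactly what your operator $U=V\circ R_N$ accomplishes, using Lemma \ref{sch-av} to realize $(\tilde e_{s_m})\subset X[\mathcal{S}_{N-1}]$ inside $X[\mathcal{S}_N]$ as the averages $(y_m)$; note that a naive transfer would instead require knowing that $[y_m\colon m\in\N]$ is complemented in $X[\mathcal{S}_N]$, which your functional-testing argument avoids. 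Your diagonal/off-diagonal scheme is the same as in the proposition's ``moreover'' part, but where the paper concludes via domination (the basis of $X[\mathcal{S}_{N-1}]$, $N-1\geq 1$, cannot be dominated by the $c_0$-basis, by \cite{gl}), you conclude via the explicit admissible functional $\Phi=\sum_i f_{m_{k_i}}$; the two uses of $N\geq 2$ (absorbing $\mathcal{S}_1$-blocks into an $\mathcal{S}_N$-admissible union, versus $X[\mathcal{S}_{N-1}]\neq c_0$) are equivalent in substance. Two small points worth a sentence each in a write-up: $(T\tilde e_{s_m})_m$ is seminormalized because $\|ST\tilde e_{s_m}\|\geq f_m(ST\tilde e_{s_m})\geq 1-\eta>0$ (all you really need is the uniform upper $c_0$-estimate $\|\sum_i T\tilde e_{s_{m_{k_i}}}\|_\infty\leq C$, available after passing to an almost disjointly supported subsequence); and it suffices to treat a single factorization $S\circ T$ rather than finite sums, since $(\bigoplus_{j=1}^l c_0)_\infty\cong c_0$ makes the class of operators factoring through $c_0$ closed under addition.
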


\end{document}